\def\widebreve#1{\mathop{\vbox{\m@th\ialign{##\crcr\noalign{\kern3\p@}%
				\brevefill\crcr\noalign{\kern3\p@\nointerlineskip}%
				$\hfil\displaystyle{#1}\hfil$\crcr}}}\limits}
\def\brevefill{$\m@th \setbox\z@\hbox{$\braceld$}%
	\bracelu\leaders\vrule \@height\ht\z@ \@depth\z@\hfill\braceru$}
\def\@citecolor{blue}
\def\@linkcolor{blue}
\def\@urlcolor{blue}
\def\@urlcolor{blue}
\def\NZQ{\mathbb}               
\def\NN{{\NZQ N}}
\def\ZZ{{\NZQ Z}}
\def\RR{{\NZQ R}}
\def\CC{{\NZQ C}}
\def\mfp{\mathfrak p}
\def \mm{\mathfrak {m}}
\def\Min{\operatorname{Min}}
\def\sat{\operatorname{sat}}
\def\grade{\operatorname{grade}}
\newtheorem{Theorem}{Theorem}[section]
\newtheorem{Lemma}[Theorem]{Lemma}
\newtheorem{Proposition}[Theorem]{Proposition}
\newtheorem{Remark}[Theorem]{Remark}
\newtheorem{Example}[Theorem]{Example}
\newtheorem{Definition}[Theorem]{Definition}
\let\epsilon\varepsilon
\let\phi=\varphi
\let\kappa=\varkappa
\begin{document}
	\title{Multiplicities of weakly graded families of ideals}
	\author{Parangama Sarkar}
	\thanks{The author was partially supported by SERB POWER Grant with Grant No. SPG/2021/002423.}
	\address{Parangama Sarkar,
		Department of Mathematics,  
		Indian Institute of Technology Palakkad,
		Nila Campus, Kanjikode, 
		Palakkad-678623, Kerala, India.}
	\email{parangamasarkar@gmail.com, parangama@iitpkd.ac.in}
	\begin{abstract} In this article, we extend the notion of multiplicity for weakly graded families of ideals which are bounded below linearly. In particular, we show that the limit $\displaystyle e_W(\mathcal I):=\lim\limits_{n\to\infty}d!\frac{\ell_R(R/I_n)}{n^d}$ exists where $\mathcal I=\{I_n\}$ is a bounded below linearly weakly graded family of ideals in a Noetherian local ring $(R,\mm)$ of dimension $d\geq 1$ with $\dim(N(\hat{R}))<d$. Furthermore, we prove that ``volume=multiplicity" formula and Minkowski inequality hold for such families of ideals. 
	We explore some properties of $e_W(\mathcal J)$
		for weakly graded families of ideals of the form $\mathcal J=\{(I_n:K)\}$ where $\{I_n\}$ is an $\mm$-primary graded family of ideals.
		We provide a necessary and sufficient condition for the equality in Minkowski inequality for the weakly graded families of ideals of the form $\{(I_n:K)\}$ where $\{I_n\}$ is a bounded filtration. Moreover, we generalize a result of Rees characterizing the inclusion of ideals with the same multiplicities for the above families of ideals.  Finally, we investigate the asymptotic behaviour of the length function  $\ell_R(H_{\mm}^0(R/(I_n:K)))$ where $\{I_n\}$ is a filtration of ideals (not necessarily $\mm$-primary).
	\end{abstract}
	
	\keywords{multiplicity, epsilon multiplicity, filtration, graded family, divisorial filtration, integral closure}
	\subjclass[2010]{13A18, 13A30, 13B22, 13H15}
	
	\maketitle
	
	\section{introduction}
	Let $(R,\mm)$ be a Noetherian local ring of dimension $d$ and $I$ be an ideal in $R.$ If $I$ is $\mm$-primary then extending the work of Hilbert \cite{hilb}, Samuel \cite{samuel} proved that for all large $n,$ the  Hilbert-Samuel function of $I,$ $\ell_R({R}/{I^n})$ coincides with a polynomial in $n$ of degree $d$ (here the length of an $R$-module $M$ is denoted by $\ell_R(M)$)  and $e(I):=\displaystyle\lim\limits_{n\to\infty}d!\frac{\ell_R({R}/{I^n})}{n^d}$ is a positive integer. The positive integer $e(I)$ is called the multiplicity of $I.$ 
	
	Some easy examples show that the limit $\displaystyle e(\mathcal I):=\lim\limits_{n\to\infty}d!\frac{\ell_R({R}/{I_n})}{n^d}$ can be an irrational number for a non-Noetherian filtration $\mathcal I=\{I_n\}$ of $\mm$-primary ideals \cite{CSS}. There are examples of graded families of $\mm$-primary ideals for which the above limit does not exist in a Noetherian local ring \cite{C2014}. The question of whether such limits exist has been considered by several mathematicians (see Ein, Lazarsfeld and Smith \cite{ELS}, Musta\c{t}\u{a} \cite{Mus}). If $(R,\mm)$ is a local domain which is essentially of finite type over an algebraically closed field $k$ with $R/\mm=k$ then Lazarsfeld and Musta\c{t}\u{a} \cite{LM} proved that the above limit exists for any graded family of $\mm$-primary ideals using a method introduced by Okounkov \cite{Ok}. In \cite{C2014}, Cutkosky proved that in a Noetherian local ring  $(R,\mm)$ of dimension $d\geq 1$, $e(\mathcal I)$ exists for any graded family $\mathcal I$ of $\mm$-primary ideals  if and only if $\dim N(\hat{R})<\dim R$ where $\hat{R}$ is the $\mm$-adic completion of $R$.  He also showed that the ``volume=multiplicity" formula holds for graded families of $\mm$-primary ideals \cite{C2014}.
	This leads naturally to the question of whether the limit $\displaystyle e_W(\mathcal I):=\lim\limits_{n\to\infty}d!\frac{\ell_R({R}/{I_n})}{n^d}$  exists for a bounded below linearly weakly graded family of ideals $\mathcal I=\{I_n\}$ (see Definition \ref{def}).
	The main objective of this article is to prove the existence of the above limit and  investigate some of the classical results about multiplicities for a bounded below linearly weakly graded family of ideals, namely Minkowski inequality and Rees' characterization of equality of multiplicities. 
		
		Minkowski inequality of multiplicities of ideals was  originated in Teissier's work on equisingularity. In his Carg\`{e}se paper \cite{T1}, he conjectured that the Minkowski inequality $e(IJ)^{1/d}\leq e(I)^{1/d}+e(J)^{1/d}$ holds for any two $\mm$-primary ideals $I$ and $J$ in $R$ with $d\geq 1$ and proved it when $R$ is reduced Cohen-Macaulay and contains $\mathbb Q$ \cite{T2}. Later Rees and Sharp  proved it in full generality \cite{RS}.  In \cite{CSS}, Cutkosky, Srinivasan and the author proved that the Minkowski inequality holds for filtrations (not necessarily Noetherian) of $\mm$-primary ideals. Minkowski equality  is further explored in  \cite{C2021} and \cite{CS}.
	
Rees used multiplicities of $\mm$-primary ideals to investigate the numerical characterization of reductions. He showed that if $R$ is a formally equidimensional local ring and $J\subset I$ are $\mm$-primary ideals in $R$ then the integral closures of the Rees algebras $R[It]$ and $R[Jt]$ in the polynomial ring $R[t]$ are same if and only if $e(I)=e(J)$ \cite{Rees4}.  In \cite{CSS}, Cutkosky, Srinivasan and the author considered the equality of $e(\mathcal I)$ and $e(\mathcal J)$ for filtrations $\mathcal I$ and $\mathcal J$ (not necessarily Noetherian) of $\mm$-primary ideals and generalized the result of Rees \cite{Rees4}.  Therefore it is  natural to analyse the Minkowski inequality and explore the equality of multiplicities of bounded below linearly weakly graded families of ideals. In this direction, in section $3$, we show the following.
	\begin{Theorem}\label{First}
		Let $\mathcal I=\{I_n\}$ be a bounded below linearly weakly graded family of ideals in a Noetherian local ring $(R,\mm)$ of dimension $d\geq 1$ with $\dim(N(\hat{R}))<d$. Then the following hold.
		\begin{enumerate} 
			\item[$(i)$] The limit  $e_W(\mathcal I):=\lim\limits_{n\to\infty}d!\ell_R(R/I_n)/n^d$ exists.
			\item[$(ii)$] (Volume=Multiplicity) $e_W(\mathcal I)=\lim\limits_{n\to\infty}e(I_n)/n^d.$
			\item[$(iii)$] (Minkowski inequality)	Let $\mathcal J=\{J_n\}$ be a bounded below linearly weakly graded family of ideals in $R$ and $\mathcal{IJ}=\{I_nJ_n\}$. Then $\mathcal{IJ}$ is a bounded below linearly weakly graded family of ideals in $R$ and
			$$e_W(\mathcal{IJ})^{1/d}\leq e_W(\mathcal{I})^{1/d}+e_W(\mathcal{J})^{1/d}.$$
			\item[$(iv)$]	Let $\mathcal J=\{J_n\}$ be a bounded below linearly weakly graded family of ideals in $R$ such that $I_n\subset J_n$ and  $\overline{I_n}=\overline{J_n}$ for all $n\gg 0$ then $e_W(\mathcal{I})=e_W(\mathcal{J})$.
		\end{enumerate}
	\end{Theorem}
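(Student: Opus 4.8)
The plan is to follow the strategy of Cutkosky \cite{C2014} for graded families of $\mm$-primary ideals and to isolate the extra input forced by the weak grading. Throughout write $K$ for the fixed ideal attached to $\mathcal I$ by Definition \ref{def}, so that, in that notation, $KI_aI_b\subseteq I_{a+b}$ (equivalently $I_aI_b\subseteq (I_{a+b}:K)$), and recall that being bounded below linearly gives a containment of the form $\mm^{cn}\subseteq I_n$ for some constant $c$ and all $n\gg 0$; in particular every $I_n$ is $\mm$-primary, so the integers $e(I_n)$ and $e(\overline{I_n})$ appearing in (ii)--(iv) are genuine Hilbert--Samuel multiplicities. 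The first reduction is to pass to the completion $\hat R$: since $\ell_R(R/I_n)=\ell_{\hat R}(\hat R/I_n\hat R)$ and $e(I_n)=e(I_n\hat R)$, and since $\{I_n\hat R\}$ is again a bounded below linearly weakly graded family, I may assume $R$ is complete. I would then use the associativity formulas for length and for multiplicity to restrict attention to the minimal primes $P$ of $\hat R$ with $\dim \hat R/P=d$; the standing hypothesis $\dim N(\hat R)<d$ is precisely what forces the contribution of the nilpotents and of the lower-dimensional primes to the two sequences $\ell_R(R/I_n)$ and $e(I_n)$ to be $o(n^d)$, so that it suffices to treat each $\hat R/P$ separately.

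For (i) and (ii) the heart of the argument is a Newton--Okounkov body construction on each $d$-dimensional quotient $S=\hat R/P$. Choosing a valuation (a flag) on $S$, I would attach to $\mathcal I$ the set $\Gamma=\{(\nu(f),n): f\in I_nS,\ n\geq 1\}\subseteq \NN^{d}\times\NN$, whose counting function at level $n$ recovers $\ell_S(S/I_nS)$ up to lower order and whose level-$n$ slices are bounded because $\mm^{cn}\subseteq I_n$. The relation $KI_aI_b\subseteq I_{a+b}$ does not make $\Gamma$ a semigroup, but it makes it one up to a fixed shift: if $v_0$ is the $\nu$-value of a suitable element of $K$, then $(\Gamma_a+\Gamma_b)+v_0\subseteq \Gamma_{a+b}$. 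I expect the main obstacle, and the only place where the weak grading genuinely enters, to be showing that such a shifted subsemigroup still has a well-defined Okounkov body $\Delta$ with $\lim_{n\to\infty}d!\,\ell_S(S/I_nS)/n^d=d!\,\mathrm{vol}(\Delta)$: the point is that a shift growing only linearly in the level $n$ translates the rescaled slice at level $n$ by the fixed vector $v_0$ and therefore washes out in the limit, so that the Kaveh--Khovanskii semigroup theorem applies after sandwiching $\Gamma$ between two honest semigroups whose bodies are translates of one another. Summing volumes over the primes $P$ proves (i). Running the same body computation for the genuinely graded families $\{I_n^k\}_{k}$ (for fixed $n$, where $K^{k-1}I_n^k\subseteq I_{nk}$) identifies $\lim_k d!\,\ell_R(R/I_n^k)/k^d=e(I_n)$ with the volume at scale $n$, and letting $n\to\infty$ gives the ``volume$=$multiplicity'' formula (ii).

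Granting (i) and (ii), parts (iii) and (iv) are formal. For (iii) I would first check that $\mathcal{IJ}=\{I_nJ_n\}$ is again bounded below linearly weakly graded, with associated ideal $KK'$ (where $K'$ governs $\mathcal J$) and lower bound $\mm^{(c+c')n}\subseteq I_nJ_n$; in particular each $I_nJ_n$ is $\mm$-primary. The classical Minkowski inequality of Rees and Sharp \cite{RS}, applied to the $\mm$-primary ideals $I_n,J_n$, gives $e(I_nJ_n)^{1/d}\leq e(I_n)^{1/d}+e(J_n)^{1/d}$; dividing by $n$ and letting $n\to\infty$, the three terms converge by (ii) to $e_W(\mathcal{IJ})^{1/d}$, $e_W(\mathcal I)^{1/d}$ and $e_W(\mathcal J)^{1/d}$, which is the assertion. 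For (iv), for each $n\gg 0$ the ideals $I_n\subseteq J_n$ are $\mm$-primary with $\overline{I_n}=\overline{J_n}$, so the classical invariance of Hilbert--Samuel multiplicity under integral closure yields $e(I_n)=e(\overline{I_n})=e(\overline{J_n})=e(J_n)$; dividing by $n^d$ and letting $n\to\infty$, (ii) applied to both families gives $e_W(\mathcal I)=e_W(\mathcal J)$.
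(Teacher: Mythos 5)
Your proposal is correct in outline, and parts (iii) and (iv) coincide with the paper's proof exactly: the paper also checks $cc'I_mJ_mI_nJ_n\subseteq I_{m+n}J_{m+n}$ and $\mm^{(s+u)n}\subseteq I_nJ_n$, applies the Rees--Sharp inequality \cite{RS} (via \cite[Corollary 17.7.3]{SH}) to the $\mm$-primary ideals $I_n,J_n$ and passes to the limit using (ii), and deduces (iv) from $e(I_n)=e(J_n)$ \cite[Proposition 11.2.1]{SH} together with (ii). For the substantive parts (i) and (ii), however, you take a genuinely different route. You re-run the Newton--Okounkov construction of \cite{C2014} directly on the weakly graded family, observing that the value sets $\Gamma_n=\nu(I_n)$ satisfy the shifted semigroup condition $(\Gamma_a+\Gamma_b)+v_0\subseteq\Gamma_{a+b}$ with $v_0=\nu(c)$; note that the honest semigroup you sandwich with is precisely the value semigroup of the graded family $\{cI_n\}$ (since $cI_a\cdot cI_b=c(cI_aI_b)\subseteq cI_{a+b}$), whose level-$n$ slice is the \emph{fixed} translate $\Gamma_n+v_0$, so the correct justification of your ``wash-out'' is translation-invariance of volume plus an $O(n^{d-1})$ count for the symmetric difference of the translated counting windows, not a shift that vanishes after rescaling; the genuinely vanishing shift occurs only in your step for (ii), where $c^{k-1}I_n^k\subseteq I_{nk}$ produces a shift $(k-1)v_0$ whose rescaling by $nk$ is $O(1/n)$, and your sketch should separate these two phenomena. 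The paper avoids redoing any convex geometry: after the same reductions (kill $N(\hat R)$ using $\dim N(\hat R)<d$ and $\mm^{sn}\subseteq I_n$, then pass to the domains $\hat R/P$ via the estimate $|\sum_i\ell_{R_i}(R_i/I_nR_i)-\ell_R(R/I_n)|\leq Cn^{d-1}$ from \cite{C2013}), it uses the identity $\ell_R(R/I_n)=\ell_R(cR/cI_n)$ for the nonzerodivisor $c$ and the Artin--Rees lemma to verify $cR\cap\mm^{tn}=cI_n\cap\mm^{tn}$ for $t=s+k+1$, so that (i) follows by citing \cite[Theorem 6.1]{C2014} for the graded families $\{cR\}\supseteq\{cI_n\}$, and (ii) follows by the analogous verification $c^nR\cap\mm^{tnm}=c^nI_m^n\cap\mm^{tnm}$ and an appeal to \cite[Theorem 4.1]{CL2024}. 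What each buys: your argument is self-contained modulo the Kaveh--Khovanskii-type semigroup theorem and shows that the Okounkov machinery tolerates bounded shifts (close in spirit to \cite{DM}), whereas the paper's is shorter and stays entirely inside known results, at the price of invoking \cite{CL2024} as a black box. One detail to add to your reduction: to know the weak grading survives the passage to $\hat R$ and to each $\hat R/P$ you need $c\in\hat R^o$, which follows from going-down for the flat map $R\to\hat R$ (Remark \ref{nzd} of the paper).
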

We define the multiplicity of a  bounded below linearly weakly graded family of ideals $\mathcal I=\{I_n\}$ to be the limit $e_W(\mathcal I)=\displaystyle\lim\limits_{n\to\infty}d!\ell_R(R/I_n)/n^d$.  The inequality in part $(iii)$ can be strict and the converse of part $(iv)$ is not true in general (Examples \ref{countmin} and \ref{e2}). Recently, in \cite{DM}, the authors considered the existence of the limit $\lim\limits_{n\to\infty}\ell_R(R/I_n)/n^d$, the ``volume=multiplicity" formula and Minkowski inequality for weakly graded families of ideals along with some different types of families of ideals. Theorem \ref{First}, $(i)-(iii)$ provide alternative proofs of Theorems $10.11$,  $10.14$ and $10.16$ in \cite{DM}. 
	
	For any $\mm$-primary ideal $I$, the length $\ell_R((I:\mm)/I)$ gives the  number of irreducible components of an irredundant irreducible decomposition of $I$ and this number is independent of the choice of the decomposition \cite{No}. Thus for any graded family $\{I_n\}$ of $\mm$-primary ideals, the asymptotic behavior of the length function $\ell_R(R/(I_n:\mm))$  is of significant interest. In section $4$, we consider the weakly graded family of ideals of the form $\mathcal J=\{(I_n:K)\}$ where $\{I_n\}$ is a graded family of $\mm$-primary ideals in $R$.  We show that the multiplicity $e_W(\mathcal J)$ is always bounded above by the multiplicity $e(\mathcal I)$ of $\mathcal I$ and in some cases, the limit achieves the upper bound (Proposition \ref{tech}). We generalize a result due to Rees \cite{Rees4} for the weakly graded family of ideals  $\{(I_n:K)\}$ and explore the Minkowski equality for such families of ideals.
	\begin{Theorem}
		Let $(R,\mm)$ be an analytically irreducible local domain and $K$ be an ideal in $R$.
		\begin{enumerate}
			\item[$(i)$] Let $\mathcal I=\{I_n\}$ be a real bounded filtration of $\mm$-primary ideals and $\mathcal J=\{J_n\}$ be a filtration of $\mm$-primary ideals in $R$ such that $I_n\subset J_n$ for all $n$. Then  $\overline{R[\mathcal I]}=\overline{R[\mathcal J]}$ if and only if $$\lim\limits_{n\to\infty}\ell_R(R/(I_n:K))/n^d=\lim\limits_{n\to\infty}\ell_R(R/(J_n:K))/n^d.$$
			\item[$(ii)$] (Minkowski equality) Let $\mathcal I=\{I_n\}$ and $\mathcal J=\{J_n\}$  be two integral bounded filtrations of $\mm$-primary ideals in $R$. Then equality holds in $(iii)$ of Theorem \ref{First} for the bounded below linearly  weakly graded families of ideals $\{(I_n:K)\}$ and $\{(J_n:K)\}$ if and only if there exist positive integers $a, b$ such that $\overline{\sum\limits_{n\geq 0} I_{an}t^n}=\overline{\sum\limits_{n\geq 0} J_{bn}t^n}$ where the integral closures are in $R[t]$.
		\end{enumerate}
	\end{Theorem}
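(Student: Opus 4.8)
The plan is to reduce both assertions to the corresponding statements for the original filtrations $\mathcal I$ and $\mathcal J$, for which the characterizations of Rees and of the Minkowski equality are already available, and then to transport them across the colon operation. The engine of this reduction is the fact that replacing $I_n$ by $(I_n:K)$ does not disturb the leading term of the length function. Since $R$ is a domain and $K\neq 0$, fix a nonzero $x\in K$; as $x$ is a nonzerodivisor, the exact sequence
\[
0\to (I_n:x)/I_n \to R/I_n \xrightarrow{\,x\,} R/I_n \to R/(I_n+(x))\to 0
\]
gives $\ell_R((I_n:x)/I_n)=\ell_R(R/(I_n+(x)))$. Because $\mathcal I$ is a (real) bounded filtration there is a constant $c>0$ with $\mm^{\lceil cn\rceil}\subseteq I_n$ for $n\gg 0$, so $R/(I_n+(x))$ is a module over the $(d-1)$-dimensional ring $R/(x)$ killed by $\mm^{\lceil cn\rceil}$, whence $\ell_R(R/(I_n+(x)))=O(n^{d-1})$. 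As $(I_n:K)\subseteq (I_n:x)$, we get $\ell_R((I_n:K)/I_n)=o(n^d)$ and therefore $e_W(\{(I_n:K)\})=e(\mathcal I)$; this is the content of Proposition \ref{tech}.

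For part $(i)$, note that $\mathcal J$ is itself bounded below linearly, since $J_n\supseteq I_n\supseteq \mm^{\lceil cn\rceil}$, so the same computation yields $e_W(\{(J_n:K)\})=e(\mathcal J)$. Hence the asserted equality of the two limits is equivalent to $e(\mathcal I)=e(\mathcal J)$. As $R$ is analytically irreducible, $\hat R$ is a domain and $\dim N(\hat R)<d$, so both multiplicities exist, and $I_n\subseteq J_n$ forces $e(\mathcal J)\le e(\mathcal I)$. It remains to invoke the generalization of Rees' theorem to filtrations from \cite{CSS}: for filtrations of $\mm$-primary ideals with $I_n\subseteq J_n$ in an analytically irreducible local domain one has $\overline{R[\mathcal I]}=\overline{R[\mathcal J]}$ if and only if $e(\mathcal I)=e(\mathcal J)$. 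Chaining the two equivalences proves $(i)$.

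For part $(ii)$, I first locate the multiplicity of the product family. The inclusions $I_n\subseteq (I_n:K)$ and $J_n\subseteq (J_n:K)$ give $I_nJ_n\subseteq (I_n:K)(J_n:K)$, while $(aK)(bK)\subseteq I_nJ_n$ for $a\in(I_n:K)$, $b\in(J_n:K)$ gives $(I_n:K)(J_n:K)\subseteq (I_nJ_n:K^2)$, so that
\[
\ell_R\!\left(R/(I_nJ_n:K^2)\right)\le \ell_R\!\left(R/(I_n:K)(J_n:K)\right)\le \ell_R\!\left(R/I_nJ_n\right).
\]
Now $\mathcal{IJ}=\{I_nJ_n\}$ is again a bounded filtration of $\mm$-primary ideals and $K^2$ contains a nonzerodivisor, so the reduction above, applied to $\mathcal{IJ}$ and $K^2$, gives $e_W(\{(I_nJ_n:K^2)\})=e(\mathcal{IJ})$; the displayed inequalities then squeeze $e_W(\{(I_n:K)(J_n:K)\})=e(\mathcal{IJ})$. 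Combined with $e_W(\{(I_n:K)\})=e(\mathcal I)$ and $e_W(\{(J_n:K)\})=e(\mathcal J)$, the Minkowski equality of Theorem \ref{First}$(iii)$ for the colon families becomes exactly
\[
e(\mathcal{IJ})^{1/d}=e(\mathcal I)^{1/d}+e(\mathcal J)^{1/d},
\]
the Minkowski equality for $\mathcal I$ and $\mathcal J$ themselves. By the characterization of Minkowski equality for filtrations in an analytically irreducible domain \cite{C2021,CS}, this holds precisely when there are positive integers $a,b$ with $\overline{\sum_{n\geq 0} I_{an}t^n}=\overline{\sum_{n\geq 0} J_{bn}t^n}$ in $R[t]$, which is the claim.

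The main obstacle is the reduction identity $e_W(\{(I_n:K)\})=e(\mathcal I)$; everything else is a formal chaining of known results. Two points warrant care. First, the nonzerodivisor argument is what forces $R$ to be a domain, and the estimate $\ell_R(R/(I_n+(x)))=O(n^{d-1})$ genuinely uses the linear lower bound $\mm^{\lceil cn\rceil}\subseteq I_n$; without it the colon can shift the leading coefficient, which is why Proposition \ref{tech} asserts only the upper bound $e_W(\{(I_n:K)\})\le e(\mathcal I)$ in general. Second, in part $(ii)$ one is forced through the intermediate family $\{(I_nJ_n:K^2)\}$ rather than $\{(I_nJ_n:K)\}$, because the colon is only submultiplicative; the squeeze is precisely what lets the extra factor of $K$ be absorbed without affecting the limit.
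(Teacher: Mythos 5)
Your proposal is correct, and its engine is genuinely different from the paper's. The paper never establishes your reduction identity $\lim_{n\to\infty}\ell_R(R/(I_n:K))/n^d=\lim_{n\to\infty}\ell_R(R/I_n)/n^d$: Proposition \ref{tech} asserts only the inequality $\leq e_W(\mathcal I)$ in general, together with equality in two special cases --- Noetherian filtrations (via Ratliff's theorem, $(I_e^{n}:K)\subset I_e^{n-1}$) and discrete valued filtrations centered at $\mm$ (via the valuation-theoretic shift $(I_{wn}:K)\subset I_{w(n-1)}$). Accordingly, the paper's proof of $(i)$ is a sandwich: writing $\overline{R[\mathcal I]}=R[\mathcal C]$ with $\mathcal C$ divisorial, it squeezes both colon limits between $e(\mathcal C)=\lim_{n\to\infty} d!\,\ell_R(R/(C_n:K))/n^d$ and $e(\mathcal I)=e(\mathcal C)$, and in $(ii)$ it controls the product family through $(C_n:K)(T_n:K)$ using the shift lemma for $\mathcal C$ and $\mathcal T$; the boundedness hypothesis is consumed exactly by the existence of $\mathcal C$. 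Your multiplication-by-$x$ sequence instead gives $\ell_R((I_n:K)/I_n)\le \ell_R((I_n:x)/I_n)=\ell_R(R/(I_n+(x)))\le \ell_R(R/(\mm^{sn}+(x)))=O(n^{d-1})$, an exact asymptotic identity valid for every filtration of $\mm$-primary ideals in a domain with $K\neq 0$ (note that $\mm^{sn}\subseteq I_1^n\subseteq I_n$ is automatic for an $\mm$-primary filtration, so no boundedness is needed at this step); this collapses $(i)$ to the Rees-type theorem for $e(\mathcal I)=e(\mathcal J)$, and, with your squeeze through $(I_nJ_n:K^2)$, collapses $(ii)$ to the Minkowski-equality theorem for $\mathcal I,\mathcal J$ themselves, replacing the paper's shift computation for $\{C_nT_n\}$. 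Your route is shorter and, in $(i)$, strictly more flexible: with the version of Rees' theorem for arbitrary filtrations you cite from \cite{CSS}, the real-boundedness of $\mathcal I$ is not used by your argument at all (the paper instead cites \cite[Theorem 14.4]{C2021}, for which the hypothesis is at hand either way). What the paper's route buys is machinery that functions beyond domains, and the structural inclusion $(I_{wn}:K)\subseteq I_{w(n-1)}$, which has independent interest.

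Two small corrections. First, your closing diagnosis of why Proposition \ref{tech}$(i)$ states only an upper bound is off: the linear lower bound $\mm^{sn}\subseteq I_n$ is part of that proposition's hypotheses (and is automatic for $\mm$-primary filtrations), so it is not the obstruction; the genuine obstruction in that proposition's generality is that $R$ there need not be a domain, so $K$ may consist entirely of zerodivisors and no element $x$ as in your exact sequence exists --- which is precisely where analytic irreducibility enters your argument. Second, in $(ii)$ you assert that $\mathcal{IJ}=\{I_nJ_n\}$ is again a bounded filtration; that claim is not needed (your reduction uses only that $\mathcal{IJ}$ is an $\mm$-primary filtration, that $K^2\neq 0$, and Cutkosky's existence theorem in the analytically irreducible ring), and since boundedness of a product filtration is not immediate from the definition, it is safer to drop the claim.
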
	
	We conclude this section by showing that for a weakly graded family (not necessarily bounded below linearly) of ideals of the form $\{(I_{n}:K)\}$, the limit  $\lim\limits_{n\to\infty}\ell_R(H_\mm^0(R/(I_{n}:K)))/{n}^{d}$ exists and bounded above by the epsilon multiplicity of the filtration $\{I_n\}$ under some extra assumptions on $\{I_n\}$ and $K$. 
\begin{Theorem}
	Let $(R,\mm)$ be an analytically unramified local ring of dimension $d\geq 1$ and $K$ be an ideal in $R$. 
	\begin{enumerate} 
		\item[$(i)$] Let 
		$\mathcal I=\{I_n\}$ be a filtration of ideals in $R$ which satisfies $A(r)$ condition for some $r\in\ZZ_{>0}$. Then the limit $$\lim\limits_{n\to\infty}\ell_R(H_\mm^0(R/(I_{n}:K)))/{n}^{d}$$ exists. 
		\\Suppose $K$ is an $\mm$-primary ideal in $R$. Then $(I_{n}:K)^{\sat}=I_n^{\sat}$ for all $n\geq 1$ and $$\lim\limits_{n\to\infty}d!\ell_R(H_\mm^0(R/(I_{n}:K)))/{n}^{d}\leq \epsilon(\mathcal I).$$ 
		\item[$(ii)$] Let $\mathcal I=\{I_n\}$ be a Noetherian filtration of ideals in $R$ with  $\grade(I_1)\geq 1$ and $K$ be an $\mm$-primary ideal in $R$. Then $$\lim\limits_{n\to\infty}d!\ell_R(H_\mm^0(R/({I_{n}}:K)))/{n}^{d}= \epsilon(\mathcal I).$$ 
		In particular, if $I$ is an ideal in $R$ with $\grade(I)\geq 1$ and $K$ is an $\mm$-primary ideal in $R$ then $$\lim\limits_{n\to\infty}d!\ell_R(H_\mm^0(R/(\overline{I^{n}}:K)))/{n}^{d}=\lim\limits_{n\to\infty}d!\ell_R(H_\mm^0(R/({I^{n}}:K)))/{n}^{d}.$$
	\end{enumerate}
\end{Theorem}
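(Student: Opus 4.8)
The plan is to reduce all three parts to one computation of $H_\mm^0$ of a colon ideal together with an exact length count. For arbitrary ideals $J,K$ in the Noetherian ring $R$ one has $(J:K)^{\sat}=(J^{\sat}:K)$, both sides being $\{x\in R:\mm^kxK\subseteq J\text{ for some }k\}$ (one inclusion uses that $xK$ is finitely generated). If $K$ is $\mm$-primary, choosing $s$ with $\mm^s\subseteq K$ upgrades this to $(J:K)^{\sat}=J^{\sat}$, which is exactly the asserted equality $(I_n:K)^{\sat}=I_n^{\sat}$. Taking $J=I_n$ gives $H_\mm^0(R/(I_n:K))=(I_n^{\sat}:K)/(I_n:K)$ in general, and $H_\mm^0(R/(I_n:K))=I_n^{\sat}/(I_n:K)$ when $K$ is $\mm$-primary. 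In the $\mm$-primary case the chain $I_n\subseteq(I_n:K)\subseteq I_n^{\sat}$ yields the exact decomposition
\[
\ell_R\!\left(H_\mm^0(R/(I_n:K))\right)=\ell_R\!\left(H_\mm^0(R/I_n)\right)-\ell_R\!\left((I_n:K)/I_n\right),
\]
which will drive both the inequality in $(i)$ and the equality in $(ii)$.

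For part $(i)$, the inequality is immediate once existence is known: for $\mm$-primary $K$ the displayed identity gives $\ell_R(H_\mm^0(R/(I_n:K)))\le \ell_R(H_\mm^0(R/I_n))$ for all $n$, so dividing by $n^d/d!$ and letting $n\to\infty$ yields $\le\epsilon(\mathcal I)$. The existence of $\lim_n\ell_R(H_\mm^0(R/(I_n:K)))/n^d$ for an \emph{arbitrary} ideal $K$ is where the real work lies, since the $\mm$-primary simplification is unavailable and $H_\mm^0(R/(I_n:K))=(I_n^{\sat}:K)/(I_n:K)$ must be handled directly. Here I would regard $\{(I_n:K)\}$ as a weakly graded family, verify that the hypothesis $A(r)$ on $\mathcal I$ forces $\{(I_n:K)\}$ to satisfy the analogous condition $A(r')$ for a suitable $r'=r'(r,K)$, and then invoke the existence theorem already established for such families over an analytically unramified ring (so that $\hat R$ is reduced and the volume/Okounkov-body method applies), realizing the limit as a volume. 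I expect the preservation of the $A(r)$ condition under the colon operation to be the main obstacle, because colonizing enlarges ideals and can a priori destroy the linear control that $A(r)$ encodes.

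For part $(ii)$ the task reduces, via the decomposition, to showing $\lim_n d!\,\ell_R((I_n:K)/I_n)/n^d=0$, and the Noetherian hypothesis gives a clean argument. Since $R$ is reduced of dimension $\ge1$, its depth is at least $1$, so the $\mm$-primary ideal $K$ contains a nonzerodivisor $k$. Artin--Rees for the Noetherian filtration $\mathcal I$, applied to $k$, produces a constant $c$ with $(I_n:k)\subseteq I_{n-c}$, and hence $(I_n:K)\subseteq(I_n:k)\subseteq I_{n-c}$ for all $n\ge c$. Consequently $(I_n:K)/I_n=(0:_{I_{n-c}/I_n}K)$ for $n\ge c$. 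Put $S=\bigoplus_{n\ge0}I_nt^n$, a finitely generated $R$-algebra because $\mathcal I$ is Noetherian. The module $\bigoplus_{n\ge c}I_{n-c}t^n$ is isomorphic to the shift $S(-c)$, hence finitely generated over $S$, and so is its quotient $W=\bigoplus_{n\ge c}I_{n-c}/I_n$; therefore $\bigoplus_{n\ge c}(I_n:K)/I_n=(0:_WK)$ is a finitely generated graded $S$-submodule of $W$ (it is $S$-stable since $K$ is central). This module is annihilated by $KS$, so it is a finitely generated graded $S/KS$-module; as $\sqrt{KS}=\sqrt{\mm S}$ we have $\dim(S/KS)=\dim(S/\mm S)$, and $\dim(S/\mm S)$, the analytic spread of $\mathcal I$, is at most $d$. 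Hence the Hilbert function satisfies $\ell_R((I_n:K)/I_n)=O(n^{d-1})=o(n^d)$, and the decomposition forces $\lim_n d!\,\ell_R(H_\mm^0(R/(I_n:K)))/n^d=\epsilon(\mathcal I)$.

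Finally, the ``in particular'' statement follows by applying $(ii)$ to the two Noetherian filtrations $\{I^n\}$ and $\{\overline{I^n}\}$: both have grade at least $1$, and the second is genuinely Noetherian by Rees's theorem precisely because $R$ is analytically unramified. Part $(ii)$ identifies the two displayed limits with $\epsilon(\{I^n\})$ and $\epsilon(\{\overline{I^n}\})$, and these agree because $\{I^n\}$ and $\{\overline{I^n}\}$ interleave with a bounded shift (the uniform bound $\overline{I^n}\subseteq I^{\,n-c_0}$ valid over an analytically unramified ring), which leaves the degree-$d$ leading coefficient of the length function unchanged. Thus the step I would scrutinize most is the existence claim in $(i)$ for non-$\mm$-primary $K$; once the colon family is shown to fit the $A(r)$ framework, everything else is either the elementary decomposition above or the finite-generation and dimension count of part $(ii)$.
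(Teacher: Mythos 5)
Your proposal has two genuine gaps, one at the heart of part $(i)$ and one in the final ``in particular'' step; part $(ii)$, by contrast, is correct and takes a different route from the paper.

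The existence claim in part $(i)$ for arbitrary $K$ is deferred, not proved, and the theorem you propose to ``invoke'' does not exist in the form you need. The family $\{(I_n:K)\}$ is only weakly graded (colon destroys multiplicativity: $xK\subseteq I_m$, $yK\subseteq I_n$ only give $xyK^2\subseteq I_{m+n}$), so the epsilon-limit theorem of \cite{CS2024} for filtrations satisfying $A(r)$ does not apply to it, and the existence theorem of Section 3 concerns $\ell_R(R/I_n)$, not saturation quotients. Ironically, the step you single out as the main obstacle is easy: since $(I_n:K)^{\sat}=(I_n^{\sat}:K)$, any $x\in (I_n:K)^{\sat}\cap\mm^{rn}$ satisfies $xK\subseteq I_n^{\sat}\cap\mm^{rn}=I_n\cap\mm^{rn}$, so the colon family satisfies $A(r)$ with the \emph{same} $r$. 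The actual missing work, which the paper carries out, is to make the family amenable to Cutkosky's machinery: choose $c\in R^o$ with $c(I_m:K)(I_n:K)\subseteq (I_{m+n}:K)$, pass to $\hat R$ (reduced, so $c$ is a nonzerodivisor), observe that $\{c(I_n:K)\}\subseteq\{c(I_n:K)^{\sat}\}$ are then honest graded families, prove via Artin--Rees and $A(r)$ that $c(I_n:K)^{\sat}\cap\mm^{ln}=c(I_n:K)\cap\mm^{ln}$ for $l=r+2k$, and apply \cite[Theorem 6.1]{C2014}. Without this twist-by-$c$ reduction (or an equivalent), your part $(i)$ is incomplete.

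Part $(ii)$ is a correct and genuinely different argument. The paper reduces via $I_{en}=I_e^n$ with $I_e\subseteq K$, uses Ratliff's theorem $(I_e^{n+1}:I_e)=I_e^n$ (this is where $\grade(I_1)\geq 1$ enters), and bounds the Hilbert function of $(L:_{R[I_e]}T)/L$ over $R[I_e]/\mm^tR[I_e]$. Your version --- a nonzerodivisor $k\in K$ (valid: $R$ is reduced of dimension $\geq 1$, so $\mm\notin\Ass(R)$), Artin--Rees on the Rees algebra $S=R[\mathcal I]$ giving $(I_n:K)\subseteq(I_n:k)\subseteq I_{n-c}$, and the finitely generated graded module $(0:_WK)$ over $S/KS$ of dimension $\leq d$ --- is cleaner, avoids Ratliff, and in fact never uses $\grade(I_1)\geq 1$, which is consistent with the paper's closing remark that the hypothesis is removable for $K=\mm$. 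Two points deserve justification rather than assertion: that $\epsilon(\mathcal I)$ exists as a limit (Noetherian filtrations satisfy $A(r)$ by \cite{CM} and \cite{Sw}), and that $\dim(S/\mm S)\leq d$ for a Noetherian filtration (via finiteness of $S$ over its Veronese $R[I_et^e]$, whose fiber cone has dimension $\ell(I_e)\leq d$).

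The last step is a genuine gap. Bounded-shift interleaving $I^n\subseteq\overline{I^n}\subseteq I^{n-c_0}$ does \emph{not} imply $\epsilon(\{I^n\})=\epsilon(\{\overline{I^n}\})$: the function $J\mapsto \ell_R(H_\mm^0(R/J))$ is not monotone under inclusion of ideals (a saturated ideal gives $0$ while a slightly larger ideal can give a large value), and $\overline{I^n}/I^n$ generally has infinite length, so no sandwich of the two length functions is available --- unlike the $\mm$-primary multiplicity situation your intuition is borrowed from. The equality of these two epsilon multiplicities is precisely \cite[Proposition 2.1]{JMV}, which the paper cites (together with \cite[Corollary 6.3]{C2014} for the existence of $\epsilon(I)$); you must either cite it or reprove it, and the bounded shift alone does not do so.
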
	
	\section{notation and definitions}
	We denote the nonnegative integers by $\NN$,  the positive integers by $\ZZ_{>0}$ and the set of   the positive real numbers by $\RR_{>0}$. 
	For a real number $x$, the smallest integer that is greater than or equal to $x$  is denoted by $\lceil x\rceil$. 
	
	Let $(R,\mm)$ be a Noetherian local ring of dimension $d\geq 1$. We denote the set $R\setminus \bigcup\limits_{P\in\Min R}P$ by $R^o$ and the $\mm$-adic completion of $R$ by $\hat{R}$.
	\begin{Remark}\label{nzd}{\rm
		Since ${\hat{R}}$  is a flat $R$-algebra, by \cite[Theorem 9.5]{Mat}, the going-down theorem holds and hence contraction of any minimal prime of $\hat{R}$ is a minimal prime of $R$. Thus for any $c\in R^o$, we have $c\in {\hat{R}}^o$.}
	\end{Remark}	
	\begin{Definition} A graded family $\mathcal I=\{I_n\}_{n\in\NN}$ of ideals in a ring $R$ is a collection of ideals in $R$ such that $I_0=R$ and $I_mI_n\subset I_{m+n}$ for all $m,n\in \NN$. 
	\\A graded family of ideals in $R$ is called a filtration if $I_n\subset I_m$ for all $m,n\in \NN$ with $m\leq n$.
	\end{Definition}
	\begin{Definition} {\rm\cite{DM}} A family of ideals $\mathcal I=\{I_n\}_{n\in\NN}$  in $R$ is called a weakly graded family of ideals  if $I_0=R$ and there exists an element $c\in R^o$ such that $cI_mI_n\subset I_{m+n}$ for all $m,n\geq 1$. 
\end{Definition}
\begin{Definition}{\rm\cite{DM}} \label{def} A weakly graded family of ideals $\mathcal I=\{I_n\}_{n\in\NN}$ in $R$  is called a bounded below linearly weakly graded family of ideals if there exists an integer $s\in \mathbb Z_{>0}$ such that $\mm^{sn}\subset I_n$ for all $n\geq 1$.
\end{Definition}
	\begin{Remark}{\em
			Let $(R,\mm)$  be a Noetherian local ring  of dimension $d\geq 1$ and $\mathcal J=\{J_n\}$ be a (bounded below linearly) weakly graded family of ideals. Let $c\in R^o$ such that $cJ_mJ_n\subset J_{m+n}$.
			\begin{enumerate}
				\item Suppose $\{J_n\}$ is a (bounded below linearly) graded family  of ideals in $R$ such that $J_1\cap R^o\neq\emptyset$. Let $a\in\RR_{>0}$ and $c\in J_1\cap R^o$. Then for all $m,n\geq 1$, we have $$cJ_{\lfloor{an}\rfloor}J_{\lfloor{am}\rfloor}\subset J_1J_{\lfloor{an}\rfloor+\lfloor{am}\rfloor}\subset J_{\lfloor{an}\rfloor+\lfloor{am}\rfloor+1}\subset J_{\lfloor a(n+m)\rfloor}.$$
				(If $\mm^{sn}\subset J_n$  then $\mm^{s(\lfloor{a}\rfloor+1) n}\subset J_{\lfloor{an}\rfloor}$ for all $n\geq 1$.) Hence $\{I_n=J_{\lfloor{an}\rfloor}\}$ is a (bounded below linearly) weakly graded family  of ideals in $R$. 
				\item Let  $A=\{K(1),\ldots,K(r)\}$ be a collection of ideals in $R$ such that $K(i)\cap R^o\neq\emptyset$ for all $1\leq i\leq r$. Then $\mathcal I=\{I_n=(J_{n}:K_n)\}$ is a (bounded below linearly) weakly graded family of ideals where $K_n$ varies in $A$. Let $c_i\in K(i) \cap R^o$  for all $1\leq i\leq r$ and $d=c_1\cdots c_r$. Then $cd^2 I_nI_m\subset I_{n+m}$ for all $m,n\geq 1$.
				\item  Let  $K$ be an ideal in $R$ such that $K\cap R^o\neq\emptyset$.  Then $\mathcal I=\{I_n=(J_n:K^{n+1})\}$  is a (bounded below linearly) weakly graded family of ideals.  Let $d\in K\cap R^o$. Then $cdI_nI_m\subset I_{n+m}$ for all $m,n\geq 1$.
				\item Let $S=\{c \in R^o : cJ_mJ_n\subset J_{m+n} \mbox{ for all } m,n\geq 1 \}$. The graded family $\{cJ_n\}$ is not  necessarily a Noetherian graded family for any $c\in S$.
			\end{enumerate}
		}
	\end{Remark}	
	
	We denote the integral closure of an ideal $I$ in $R$ by $\overline I$. Let $\mathcal I=\{I_n\}$ be a graded family of ideals in $R$. We say $\mathcal I=\{I_n\}$ is a Noetherian graded family if the graded $R$-algebra
	$
	R[\mathcal I]=\bigoplus_{n\in\NN}I_nt^n
	$ is a finitely generated $R$-algebra. Otherwise, we say  $\mathcal I=\{I_n\}$  is non-Noetherian.
	Let $\overline {R[\mathcal I]}$ denote the integral closure of $R[\mathcal I]$ in the polynomial ring $R[t]$. It is shown in \cite[Lemma 3.6]{CS} that for a filtration $\mathcal I=\{I_n\}$,  the integral closure of $R[\mathcal I]$ in $R[t]$ is
	$
	\overline{R[\mathcal I]}=\bigoplus_{m\ge 0}J_mt^m
	$
	where $\{J_m\}$ is the filtration
	$$J_m=\{f\in R\mid f^r\in \overline{I_{rm}}\mbox{ for some }r>0\}.$$ 
	Let $(R,\mm)$ be a Noetherian local domain of dimension $d$ with quotient field $K$. Let $\nu$ be a discrete valuation of $K$ with valuation ring $\mathcal O_{\nu}$ and maximal ideal $m_{\nu}$. Suppose that $R\subset \mathcal O_{\nu}$. Then for all $n\in \NN$, the valuation ideals are defined as
	$$
	I(\nu)_n=\{f\in R\mid \nu(f)\ge n\}=m_{\nu}^n\cap R.
	$$
	\begin{Definition} A discrete valued filtration of $R$ is a filtration $\mathcal I=\{I_n\}$ such that there exist discrete valuations $\nu_1,\ldots,\nu_r$ of $K$ and $a_1,\ldots, a_r\in \RR_{>0}$ such that $R\subset \mathcal O_{{\nu}_i}$ for all $1\leq i\leq r$ and for all $n\in \NN$,
	$$I_n=I(\nu_1)_{\lceil na_1\rceil}\cap\cdots\cap I(\nu_r)_{\lceil na_r\rceil}.$$ \end{Definition} 
	A divisorial valuation of $R$ (\cite[Definition 9.3.1]{SH}) is a valuation $\nu$ of $K$ such that if $\mathcal O_{\nu}$ is the valuation ring of $\nu$ with maximal ideal $\mathfrak m_{\nu}$, then $R\subset O_{\nu}$ and if $\mfp=\mathfrak m_{\nu}\cap R$ then $\mbox{trdeg}_{\kappa(\mfp)}\kappa(\nu)={\rm ht}(\mfp)-1$, where $\kappa(\mfp)$ is the residue field of $R_{\mfp}$ and $\kappa(\nu)$ is the residue field of $O_{\nu}$. Every divisorial valuation $\nu$ is a  discrete valuation  \cite[Theorem 9.3.2]{SH}. 
	\begin{Definition} A divisorial filtration of $R$ is a discrete valued filtration 
	$$
	\{I_n=I(\nu_1)_{\lceil na_1\rceil}\cap\cdots\cap I(\nu_r)_{\lceil na_r\rceil}\}
	$$
	where all the discrete valuations $\nu_1,\ldots,\nu_r$ are divisorial valuations.  A divisorial filtration is called integral if $a_i\in \ZZ_{>0}$ for all $1\leq i\leq r$.
\end{Definition} 
	\begin{Definition} A filtration of ideals $\mathcal J$ in $R$ is called a bounded filtration if there exists a divisorial filtration $\mathcal C$ such that $\overline{R[\mathcal J]}=R[\mathcal C]$. 
	 A bounded filtration  $\mathcal J$ is called an integral bounded filtration if $\overline{R[\mathcal J]}=R[\mathcal C]$ for some integral divisorial filtration $\mathcal C$.
	\end{Definition} 
	Following the same lines of the proof of \cite[Lemma 5.7]{C2021}, we get 
	\begin{Lemma}\label{integrallyclosed}
		If $\mathcal I$ is a discrete valued filtration in a Noetherian local domain $R$ then $\overline{R[\mathcal I]}=R[\mathcal I]$.
	\end{Lemma}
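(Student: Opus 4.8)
The plan is to exploit the explicit description of the integral closure recalled immediately before the statement. Since a discrete valued filtration is in particular a filtration, $\overline{R[\mathcal I]}=\bigoplus_{m\ge 0}J_mt^m$ where $J_m=\{f\in R\mid f^r\in\overline{I_{rm}}\text{ for some }r>0\}$ (this is \cite[Lemma 3.6]{CS}). As $R[\mathcal I]=\bigoplus_{m}I_mt^m$ and $I_m\subseteq\overline{I_m}\subseteq J_m$ holds trivially (take $r=1$), the lemma reduces entirely to proving the reverse inclusion $J_m\subseteq I_m$ for every $m\ge 0$.

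The key step is the observation that each ideal $I_n$ occurring in a discrete valued filtration is already integrally closed, i.e. $\overline{I_n}=I_n$. First I would show that a single valuation ideal $I(\nu)_k=m_\nu^k\cap R$ is integrally closed in $R$. If $f\in R$ satisfies an equation of integral dependence $f^j+b_1f^{j-1}+\cdots+b_j=0$ with $b_i\in I(\nu)_k^{\,i}\subseteq m_\nu^{ki}$, then applying $\nu$ and using $\nu(\sum_i b_if^{j-i})\ge\min_i\bigl(ki+(j-i)\nu(f)\bigr)$ together with $j\,\nu(f)=\nu(f^j)$ forces $\nu(f)\ge k$, so that $f\in m_\nu^k\cap R=I(\nu)_k$. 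Since a finite intersection of integrally closed ideals is integrally closed, it follows that $I_n=\bigcap_{i=1}^r I(\nu_i)_{\lceil na_i\rceil}$ is integrally closed as well.

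With this claim in hand the reverse inclusion is a short valuation-theoretic computation. Let $f\in J_m$ and choose $r>0$ with $f^r\in\overline{I_{rm}}$; by the claim $\overline{I_{rm}}=I_{rm}$, so $f^r\in I(\nu_i)_{\lceil rma_i\rceil}$ for each $i$. Hence $r\,\nu_i(f)=\nu_i(f^r)\ge\lceil rma_i\rceil\ge rma_i$, and dividing by $r$ gives $\nu_i(f)\ge ma_i$. Because each $\nu_i$ is discrete, $\nu_i(f)$ is an integer, so in fact $\nu_i(f)\ge\lceil ma_i\rceil$, i.e. $f\in I(\nu_i)_{\lceil ma_i\rceil}$ for every $i$; intersecting yields $f\in I_m$, as required.

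The only genuine obstacle is the integral closedness of the valuation ideals $I(\nu)_k$ and its stability under finite intersection; once that is in place (and it matches the argument of \cite[Lemma 5.7]{C2021}), the graded computation is routine. I would be careful to use the discreteness of each $\nu_i$ exactly at the passage from $\nu_i(f)\ge ma_i$ to $\nu_i(f)\ge\lceil ma_i\rceil$, since this is precisely where the argument would break down for a general, non-discrete valuation.
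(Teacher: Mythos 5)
Your proof is correct and is essentially the argument the paper invokes: the paper gives no independent proof but says the lemma follows ``along the same lines'' as \cite[Lemma 5.7]{C2021}, whose proof is exactly your computation --- reduce via \cite[Lemma 3.6]{CS} to showing $J_m\subseteq I_m$, use that each $I(\nu_i)_{\lceil na_i\rceil}$ (hence each finite intersection $I_n$) is integrally closed, and pass from $\nu_i(f)\ge ma_i$ to $\nu_i(f)\ge\lceil ma_i\rceil$ by discreteness. Your explicit flagging of where discreteness of the $\nu_i$ is used is a nice touch, but there is no substantive difference from the cited argument.
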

	Let $(R,\mm)$ be a Noetherian local ring. For an ideal $I$ in $R$, the saturation of $I$, denoted by $I^{\sat}$, is defined as $I^{\sat}=I:\mm^{\infty}=\bigcup\limits_{n\geq 1}(I:\mm^n)$. 
	\begin{Definition}{\rm($A(r)$ Condition)} A graded family of ideals $\{I_n\}$ in $R$ is said to satisfy $A(r)$ for some $r\in\ZZ_{>0}$ if for all $n\geq 1$, $I_n^{\sat}\cap \mm^{rn}=I_n\cap \mm^{rn}$.
		\end{Definition} Any discrete valued filtration in a Noetherian local domain satisfies $A(r)$ for some $r\in\ZZ_{>0}$ (\cite[Theorem 3.1]{CS2024}).  
	
	The epsilon multiplicity of an ideal $I$ in a Noetherian local ring $(R,\mm)$  is defined in \cite{KV} to be $$
	\epsilon(I)=d!\limsup_n\frac{\ell_R(H^0_{\mm}(R/I^n))}{n^d}.$$ 
	In \cite[Corollary 6.3]{C2014}, it is shown that if $R$ is analytically unramified then the epsilon multiplicity of $I$ is a limit. Epsilon multiplicity of a filtration is introduced in \cite{CS2024}. Epsilon multiplicity of a filtration $\mathcal I=\{I_n\}$ satisfying $A(r)$ for some $r\in\ZZ_{>0}$ is a limit, i.e., $
\displaystyle\epsilon(\mathcal I)=\lim\limits_{n\to\infty}d!\frac{\ell_R(H^0_{\mm}(R/I_n))}{n^d}$ exists. For more details about $A(r)$ condition, see \cite{CS2024}. 
	\section{multiplicity of weakly graded family of ideals}
	In this section, we show the existence of the limit $e_W(\mathcal I)=\lim\limits_{n\to\infty}d!\ell_R(R/I_n)/n^d$ for a bounded below linearly weakly graded family of ideals $\mathcal I=\{I_n\}$  in a Noetherian local ring $(R,\mm)$ of dimension $d\geq 1$ with $\dim(N(\hat{R}))<d$. We generalize the ``volume=multiplicity" formula for  a bounded below linearly weakly graded family of ideals. We also prove Minkowski inequality and 
	show that this inequality can be strict in general.
	 We provide a sufficient condition for the equality of the multiplicities of two bounded below linearly weakly graded families of ideals. The following lemma is well-known. For the sake of completeness, we include the proof here.
	\begin{Lemma}\label{reduction}
			Let $\mathcal I=\{I_n\}$ be a family of ideals in a reduced Noetherian local ring $(R,\mm)$ of dimension $d\geq 1$ and there exists a positive integer $s$ such that $\mm^{sn}\subset I_n$ for all $n\geq 1$. Let $\Min(R)=\{P_1,\ldots,P_r\}$ and $R_i=R/P_i$ for all $1\leq i\leq r$. Then the following hold.
			\begin{enumerate}
				\item[$(i)$] The existence of the limits $\lim\limits_{n\to\infty}\ell_{R_i}(R_i/I_nR_i)/n^d$ for all $1\leq i\leq r$ imply the existence of the limit $\lim\limits_{n\to\infty}\ell_R(R/I_n)/n^d$.
				\item[$(ii)$] Suppose for all $1\leq i\leq r$, the limits  $\lim\limits_{n\to\infty}\ell_{R_i}(R_i/I_nR_i)/n^d$ exist and $$\displaystyle\lim\limits_{n\to\infty}\ell_{R_i}(R_i/I_nR_i)/n^d=\lim\limits_{n\to\infty}\frac{\lim\limits_{m\to\infty}\ell_{R_i}(R_i/I_n^mR_i)/m^d}{n^d}.$$ Then $$\displaystyle\lim\limits_{n\to\infty}d!\ell_R(R/I_n)/n^d=\displaystyle\lim\limits_{n\to\infty}e(I_n)/n^d.$$ 
		\end{enumerate}	
	\end{Lemma}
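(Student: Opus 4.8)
The plan is to exploit that $R$ is reduced, so that localizing at each minimal prime $P_i$ makes $R_{P_i}$ a field; in particular the diagonal map $R\to\bigoplus_{i=1}^r R_i$, whose kernel is $\bigcap_i P_i=0$, is injective and sits in a short exact sequence
\[
0\to R\to\bigoplus_{i=1}^r R_i\to C\to 0,
\]
where $C_{P_i}=0$ for every minimal prime $P_i$, whence $\dim C\le d-1$. (Reducedness is also what makes the comparison weight‑free: the Nagata weights $\ell_{R_{P_i}}(R_{P_i})$ are all $1$.) The idea is to apply $-\otimes_R R/I_n$ and read off a comparison between $\ell_R(R/I_n)$ and $\sum_i\ell_{R_i}(R_i/I_nR_i)$ up to an error of order $n^{d-1}$.

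Concretely, I would write down the right‑exact sequence
\[
\operatorname{Tor}_1^R(C,R/I_n)\to R/I_n\to\bigoplus_{i=1}^r R_i/I_nR_i\to C/I_nC\to 0,
\]
let $K_n$ be the image of the first map (equivalently $\bigcap_i(P_i+I_n)/I_n$), and use additivity of length to get
\[
\ell_R(R/I_n)=\sum_{i=1}^r\ell_{R_i}(R_i/I_nR_i)+\ell_R(K_n)-\ell_R(C/I_nC).
\]
Since $\mm^{sn}\subseteq I_n$, the module $C/I_nC$ is a quotient of $C/\mm^{sn}C$, and $\dim C\le d-1$ gives $\ell_R(C/I_nC)=O(n^{d-1})$.

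The real obstacle is to bound $\ell_R(K_n)$ uniformly in $n$, since $K_n$ itself varies with $n$. Here I would use that $K_n$ is a quotient of $\operatorname{Tor}_1^R(C,R/I_n)$, hence annihilated by $\operatorname{Ann}_R(C)$. As $C$ is supported off every minimal prime, $\operatorname{Ann}_R(C)\not\subseteq P_i$ for all $i$, so by prime avoidance there is an element $c\in\operatorname{Ann}_R(C)\cap R^o$; because $c$ lies in no minimal prime, $\dim R/cR\le d-1$. Then $cK_n=0$, so $K_n\hookrightarrow (I_n:c)/I_n$, and the exact sequence $0\to (I_n:c)/I_n\to R/I_n\xrightarrow{\,c\,}R/I_n\to R/(I_n+cR)\to 0$ yields $\ell_R((I_n:c)/I_n)=\ell_R(R/(I_n+cR))$. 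Since $R/(I_n+cR)$ is a quotient of $(R/cR)/\mm^{sn}(R/cR)$ and $\dim R/cR\le d-1$, we obtain $\ell_R(K_n)=O(n^{d-1})$. Combining the estimates gives $\ell_R(R/I_n)=\sum_i\ell_{R_i}(R_i/I_nR_i)+O(n^{d-1})$, and dividing by $n^d$ makes part $(i)$ immediate: a finite sum of the convergent sequences $\ell_{R_i}(R_i/I_nR_i)/n^d$ plus a term $O(n^{-1})$ converges.

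For part $(ii)$ I would feed this asymptotic decomposition into Nagata's associativity formula for Hilbert–Samuel multiplicity. Each $I_n$ is $\mm$‑primary because $\mm^{sn}\subseteq I_n$, so $e(I_n)=\sum_{\dim R_i=d}e(I_nR_i;R_i)$ (weights $1$ by reducedness), while $\lim_{m\to\infty}\ell_{R_i}(R_i/I_n^mR_i)/m^d=e(I_nR_i;R_i)/d!$. Thus the standing hypothesis reads exactly $\lim_{n\to\infty}d!\,\ell_{R_i}(R_i/I_nR_i)/n^d=\lim_{n\to\infty}e(I_nR_i;R_i)/n^d$ for each $i$, the indices with $\dim R_i<d$ contributing $o(n^d)$ on both sides and so being negligible. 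Summing these equalities over the $d$‑dimensional minimal primes, the left side reassembles through the decomposition of $(i)$ into $\lim_{n\to\infty}d!\,\ell_R(R/I_n)/n^d$ and the right side, via the associativity formula, into $\lim_{n\to\infty}e(I_n)/n^d$, which is the desired volume=multiplicity identity.
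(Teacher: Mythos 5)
Your proof is correct and takes essentially the same route as the paper: the key length comparison $\bigl|\sum_{i=1}^r \ell_{R_i}(R_i/I_nR_i)-\ell_R(R/I_n)\bigr|\leq Cn^{d-1}$, which the paper simply imports from the proof of \cite[Lemma 5.1]{C2013}, is exactly what your exact-sequence argument establishes (the sequence $0\to R\to\bigoplus_i R_i\to C\to 0$, the bound $\ell_R(C/I_nC)=O(n^{d-1})$ from $\mm^{sn}\subseteq I_n$ and $\dim C\leq d-1$, and the annihilator element $c\in\operatorname{Ann}_R(C)\cap R^o$ giving $\ell_R(K_n)\leq\ell_R(R/(I_n+cR))=O(n^{d-1})$ are precisely the ingredients of the cited lemma). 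The only cosmetic divergence is in part $(ii)$: the paper re-applies the same comparison to the families $\{I_n^m\}_{m\in\NN}$ (for fixed $n$) to identify $\sum_i\lim\limits_{m\to\infty}d!\,\ell_{R_i}(R_i/I_n^mR_i)/m^d$ with $e(I_n)$, whereas you invoke the classical associativity formula $e(I_n)=\sum_{\dim R_i=d}e(I_nR_i;R_i)$ with unit weights by reducedness --- these two steps are interchangeable and carry the same content.
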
	
	\begin{proof}
	$(i)$	Using the same lines of the proof of  \cite[Lemma 5.1]{C2013}, for all $n\geq 1$, we have \begin{equation}\label{known}|\sum\limits_{i=1}^r \ell_{R_i}(R_i/I_nR_i)-\ell_R(R/{I_n})|\leq Cn^{d-1}\end{equation} for some constant $C\in\ZZ_{>0}$. Hence  the existence of the limits 
	$\lim\limits_{n\to\infty}\ell_{R_i}(R_i/I_nR_i)/n^d$  for all $1\leq i\leq r$ imply the existence of the limit $\lim\limits_{n\to\infty}\ell_R(R/{I_n})/n^d$. 
	\\$(ii)$ Using part $(i)$ and equation (\ref{known}), we have
	 \begin{eqnarray*}
		\lim\limits_{n\to\infty}d!\ell_R(R/I_n)/n^d&=&\sum\limits_{i=1}^r \lim\limits_{n\to\infty}d!\ell_{R_i}(R_i/I_nR_i)/n^d=
		\sum\limits_{i=1}^r\lim\limits_{n\to\infty}\frac{\lim\limits_{m\to\infty}d!\ell_{R_i}(R_i/I_n^mR_i)/m^d}{n^d}
		 \\&=&
		\lim\limits_{n\to\infty}\frac{\sum\limits_{i=1}^r\lim\limits_{m\to\infty} d!\ell_{R_i}(R_i/I_n^mR_i)/m^d}{n^d}\\&=&	\lim\limits_{n\to\infty}\frac{\lim\limits_{m\to\infty} d!\ell_{R}(R/I_n^m)/m^d}{n^d}=\lim\limits_{n\to\infty}e(I_n)/n^d
		\end{eqnarray*} where the second last equality holds from equation (\ref{known}) for the family of ideals $\{I_n^m\}_{m\in\NN}$.
	\end{proof}	
	\begin{Theorem}\label{existence}
		Let $\mathcal I=\{I_n\}$ be a bounded below linearly weakly graded family of ideals in a Noetherian local ring $(R,\mm)$ of dimension $d\geq 1$ with $\dim(N(\hat{R}))<d$. Then the following hold.
		\begin{enumerate} 
			\item[$(i)$] The limit  $e_W(\mathcal I):=\lim\limits_{n\to\infty}d!\ell_R(R/I_n)/n^d$ exists.
			\item[$(ii)$] (Volume=Multiplicity) $e_W(\mathcal I)=\lim\limits_{n\to\infty}e(I_n)/n^d.$
			\item[$(iii)$] (Minkowski inequality)	Let $\mathcal J=\{J_n\}$ be a bounded below linearly weakly graded family of ideals in $R$ and $\mathcal{IJ}=\{I_nJ_n\}$. Then  $\mathcal{IJ}$ is a bounded below linearly weakly graded family of ideals in $R$ and
			$$e_W(\mathcal{IJ})^{1/d}\leq e_W(\mathcal{I})^{1/d}+e_W(\mathcal{J})^{1/d}.$$
			\item[$(iv)$]	Let $\mathcal J=\{J_n\}$ be a bounded below linearly weakly graded family of ideals in $R$ such that $I_n\subset J_n$ and  $\overline{I_n}=\overline{J_n}$ for all $n\gg 0$ then $e_W(\mathcal{I})=e_W(\mathcal{J})$.
		\end{enumerate}
	\end{Theorem}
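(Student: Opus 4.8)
The plan is to reduce the statement to the case of a complete local domain, where the limit can be accessed through the semigroup (Okounkov body) machinery of Cutkosky, and then to deduce the ``volume $=$ multiplicity'' equality, the Minkowski inequality, and part $(iv)$ by formal arguments. Since each $I_n$ is $\mm$-primary (because $\mm^{sn}\subset I_n$), we have $\ell_R(R/I_n)=\ell_{\hat R}(\hat R/I_n\hat R)$ and $e(I_n)=e(I_n\hat R)$; by Remark \ref{nzd} the defect element $c\in R^o$ lies in $\hat R^o$, so $\{I_n\hat R\}$ is again a bounded below linearly weakly graded family and $\dim N(\hat R)<d$ by hypothesis. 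Thus we may assume $R=\hat R$ is complete. Writing $N=N(R)$ with $\dim N<d$, the exact sequence
$$0\to N/(N\cap I_n)\to R/I_n\to (R/N)/I_n(R/N)\to 0$$
together with $\mm^{sn}N\subset N\cap I_n$ gives $\ell_R(N/(N\cap I_n))\le \ell_R(N/\mm^{sn}N)=O(n^{d-1})$. As $e(I_n)=e(I_n(R/N))$ for the same dimension reason, both the existence and the values of the limits in $(i)$ and $(ii)$ are unchanged on passing to the reduced ring $R/N$ (whose defect element $\bar c$ still lies in $(R/N)^o$). Finally, Lemma \ref{reduction} reduces the reduced case to its minimal primes: for each $P_i\in\Min(R)$ the family $\{I_n(R/P_i)\}$ is a bounded below linearly weakly graded family in the complete local domain $R_i=R/P_i$, so it suffices to prove $(i)$ and $(ii)$ there.

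For the domain case I would follow the construction of \cite{C2014}. Fix a $\ZZ^d$-valued valuation $v$ on $R\setminus\{0\}$ with one-dimensional leaves, so that $\ell_R(R/I)=\#\big(v(R\setminus\{0\})\setminus v(I\setminus\{0\})\big)$ for every $\mm$-primary ideal $I$, and attach to the family the set $\Gamma=\{(v(f),n):0\ne f\in I_n,\ n\ge 1\}\cup\{0\}\subset \NN^d\times\NN$. The weak-grading relation $cI_mI_n\subset I_{m+n}$ is not quite a semigroup law, but setting $w=v(c)$ and shifting each slice, the set $\Gamma'=\{(v(f)+w,n):0\ne f\in I_n,\ n\ge1\}\cup\{0\}$ is a genuine subsemigroup of $\NN^d\times\NN$: if $f\in I_m$ and $g\in I_n$ then $cfg\in I_{m+n}$ and $v(cfg)+w=(v(f)+w)+(v(g)+w)$. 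Since translation by the fixed vector $w$ is a bijection on each slice, $\#\Gamma'_n=\#\Gamma_n$, so the defect $c$ is absorbed at no cost to the counting function. The hypothesis $\mm^{sn}\subset I_n$ guarantees that the cone generated by $\Gamma'$ is full and its slices are bounded, which are exactly the hypotheses of Khovanskii's theorem; hence $\lim_n \#\Gamma_n/n^d$ exists, and comparing with the value semigroup of $R$ (using $\mm^{sn}\subset I_n$ to confine $v(R)\setminus v(I_n)$ to the finite set $v(R)\setminus v(\mm^{sn})$) yields the existence of $e_W(\mathcal I)=\lim_n d!\,\ell_R(R/I_n)/n^d$. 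This proves $(i)$.

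For $(ii)$ I would argue that the Okounkov body $\Delta$ produced above computes both sides. On one hand $\mathrm{vol}(\Delta)=e_W(\mathcal I)/d!$ by $(i)$. On the other hand, applying the construction to the genuine graded family $\{I_n^m\}_m$ for each fixed $n$ identifies $\lim_m d!\,\ell_R(R/I_n^m)/m^d=e(I_n)$ with the volume of an associated body $\Delta_n$, and the inclusion $c^{m-1}I_n^m\subset I_{mn}$ (an easy induction on $m$ from $cI_aI_b\subset I_{a+b}$) together with $\mm^{sn}\subset I_n$ forces $\tfrac1n\Delta_n\to\Delta$ by the limiting argument of \cite{C2014}, giving $\lim_n e(I_n)/n^d=d!\,\mathrm{vol}(\Delta)=e_W(\mathcal I)$ in each $R_i$. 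Since this equality holds in every domain $R_i$ and the relevant $\ell$-limits exist, Lemma \ref{reduction}$(ii)$ delivers $(ii)$ over $R$. I expect the genuine obstacle of the whole argument to be concentrated in $(i)$ and $(ii)$: namely, showing that the defect $c$ contributes only to the lower-order $O(n^{d-1})$ terms. The naive length comparisons break down because $c\in\mm=\sqrt{I_n}$ renders ideals such as $c^{m-1}I_n^m$ non-$\mm$-primary (their colength is infinite when $d\ge 2$), and it is precisely the passage to the shifted semigroup $\Gamma'$, where $c$ enters only through the fixed translation $v(c)$, that circumvents this difficulty.

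Parts $(iii)$ and $(iv)$ are then formal. For $(iii)$ I would first check directly that $\{I_nJ_n\}$ is bounded below linearly weakly graded, with defect $cc'\in R^o$ and $\mm^{(s+s')n}\subset \mm^{sn}\mm^{s'n}\subset I_nJ_n$. The classical Minkowski inequality of Rees and Sharp \cite{RS} applied to the $\mm$-primary ideals $I_n$ and $J_n$ gives $e(I_nJ_n)^{1/d}\le e(I_n)^{1/d}+e(J_n)^{1/d}$; dividing by $n$ and passing to the limit, using $(ii)$ in the form $e_W(\mathcal I)^{1/d}=\lim_n e(I_n)^{1/d}/n$, yields $e_W(\mathcal{IJ})^{1/d}\le e_W(\mathcal I)^{1/d}+e_W(\mathcal J)^{1/d}$. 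For $(iv)$, the hypothesis $\overline{I_n}=\overline{J_n}$ for $n\gg0$ gives $e(I_n)=e(J_n)$, since multiplicity depends only on the integral closure, whence $e_W(\mathcal I)=e_W(\mathcal J)$ by $(ii)$.
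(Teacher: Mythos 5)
Your proposal is correct and shares the paper's entire reduction skeleton --- complete $R$, kill $N(\hat R)$ using $\dim N(\hat R)<d$ together with $\mm^{sn}\subset I_n$, pass to the minimal primes via Lemma \ref{reduction}, and prove $(iii)$ and $(iv)$ exactly as the paper does (termwise Rees--Sharp \cite[Corollary 17.7.3]{SH} plus part $(ii)$, and $e(I_n)=e(J_n)$ from $\overline{I_n}=\overline{J_n}$ plus part $(ii)$) --- but the heart of $(i)$ and $(ii)$ is handled by a genuinely different route. The paper never re-enters the Okounkov-body machinery: it observes that $\{cI_n\}$ is an honest graded family with $\ell_R(R/I_n)=\ell_R(cR/cI_n)$, verifies by Artin--Rees that $cR\cap\mm^{tn}=cI_n\cap\mm^{tn}$ for $t=s+k+1$, and then quotes \cite[Theorem 6.1]{C2014} (existence of $\lim\ell_R(J_n/L_n)/n^d$ for nested graded families agreeing inside $\mm^{tn}$) for $(i)$; for $(ii)$ it builds the graded families $J(m)_n=c^nR\supset L(m)_n=c^nI_m^n$, checks the analogous condition $J(m)_n\cap\mm^{tnm}=L(m)_n\cap\mm^{tnm}$ by iterating Artin--Rees along powers of the nonzerodivisor $c$, and quotes \cite[Theorem 4.1]{CL2024} for the double-limit interchange, finishing with Lemma \ref{reduction}$(ii)$. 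Your shifted semigroup $\Gamma'$ is precisely the value semigroup of the paper's family $\{cI_n\}$ (since $v(cI_n)=v(c)+v(I_n)$), so both proofs rest on the same single idea --- absorb the defect element $c$ multiplicatively --- and your diagnosis that this absorption is the genuine obstacle (colengths of non-$\mm$-primary ideals like $c^{m-1}I_n^m$ being infinite) is accurate; what differs is that you re-run the semigroup/volume argument of \cite{C2014} directly, in $(ii)$ effectively re-proving the needed case of \cite[Theorem 4.1]{CL2024} via convergence of the bodies $\frac{1}{n}\Delta_n\to\Delta$, while the paper packages all of this into two black boxes at the cost of one extra Artin--Rees computation. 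Your version buys independence from \cite{CL2024} and avoids Artin--Rees entirely, since you bound the complement $v(R)\setminus v(I_n)$ directly by $v(R)\setminus v(\mm^{sn})$; the paper's version is shorter and keeps the semigroup technicalities out of sight. Two points to tighten if you write yours out in full: the slices $\Gamma'_n$ are infinite, so you cannot literally invoke Khovanskii's theorem on $\#\Gamma'_n$ --- you need the truncation-at-a-linearly-bounded-box device and the limit theorem for strongly nonnegative semigroups of \cite{C2014}, not boundedness of slices; and a valuation with ``one-dimensional leaves'' computing $\ell_R(R/I)$ as a point count is not available for an arbitrary $\ZZ^d$-valued valuation, so you must use the specific valuation constructed in \cite{C2014} (which also handles arbitrary residue fields), as you implicitly intend by following that construction.
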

	\begin{proof} 	Let $s\in \mathbb Z_{>0}$ such that $\mm^{sn}\subset I_n$ for all $n\geq 1$ and $c\in R^o$ such that $cI_mI_n\subset I_{m+n}$ for all $m,n\geq 1$. Note that $\{cI_n\}$ is a graded family of ideals.
		
		By  Artin-Rees Lemma, there exists a positive integer $k$ such that for all $n\geq k$,
		\begin{equation}\label{eq1}cR\cap \mm^n=\mm^{n-k}(cR\cap \mm^k)\subset c\mm^{n-k}.\end{equation}
		
		$(i)$	Let $S=\hat{R}/N(\hat{R})$. Using the technique in \cite{C2014}, we first show that  $\lim\limits_{n\to\infty}\ell_R(R/I_n)/n^d$ exists if $\lim\limits_{n\to\infty}\ell_S(S/I_nS)/n^d$ exists. Note that $\ell_R(R/I_n)=\ell_{\hat{R}}(\hat{R}/I_n\hat{R})$ as $\hat{R}$ is a faithfully flat extension of $R$. Consider the short exact sequence 
		\begin{equation*}\label{nilradical}0\longrightarrow N(\hat{R})/N(\hat{R})\cap I_n\hat{R}\longrightarrow \hat{R}/I_n\hat{R}\longrightarrow \hat{R}/I_n\hat{R}+N(\hat{R})\longrightarrow 0.\end{equation*} Since $\dim(N(\hat{R}))<d$, we have \begin{equation*}\label{zero}\lim\limits_{n\to\infty}\ell_{\hat{R}}(N(\hat{R})/N(\hat{R})\cap I_n\hat{R})/n^d\leq \lim\limits_{n\to\infty}\ell_{\hat{R}}(N(\hat{R})/{\mathfrak m}_{\hat{R}}^{sn}N(\hat{R}))/n^d=0.\end{equation*} Thus the existence of the limit $\lim\limits_{n\to\infty}\ell_S(S/I_nS)/n^d$ implies the existence of the limit $\lim\limits_{n\to\infty}\ell_R(R/I_n)/n^d$. 
		
		By Remark \ref{nzd}, we have $c\in {\hat{R}}^o$ and  hence $c\in S^o$.  We replace $R$ by $S$ and assume the ring $R$ is a complete reduced local Noetherian ring and $c$ is a nonzerodivisor in $R$. 
		
		Let $\Min(R)=\{P_1,\ldots,P_r\}$ and $R_i=R/P_i$ for all $1\leq i\leq r$. 
		Thus by Lemma \ref{reduction}, it is  enough to show that for all $1\leq i\leq r$, $\lim\limits_{n\to\infty}\ell_{R_i}(R_i/I_nR_i)/n^d$ exists.
	
	 Note that $c\in {R_i}^o$ and $\{I_nR_i\}$ is a bounded below linearly  weakly graded family of ideals  for all $1\leq i\leq r$. 
	 Thus by replacing $R$ by $R_i$, we can assume that $R$ is a complete local domain. Note that $\ell_R(R/I_n)=\ell_R(cR/cI_n)$ for all $n\geq 1$.
		
		Consider the two graded families of ideals $\mathcal J=\{J_0=R \mbox{ and }J_n=cR {\mbox{ for all }n\geq 1}\}$ and $\mathcal L=\{L_0=R\mbox{ and }L_n=cI_n{\mbox{ for all }n\geq 1}\}$. Since $\dim R\geq 1$, $c$ is a nonzero element and $I_1$ is an $\mm$-primary ideal, we have  $cI_1$ is a nonzero ideal. 
		
		We show that for $t=s+k+1$ and for all $n\geq 1$, 
		\begin{equation}\label{1eq} cR\cap \mm^{tn}=cI_n\cap \mm^{tn}.\end{equation}
		We already have $cI_n\cap \mm^{tn}\subset cR\cap \mm^{tn}$ for all $n\geq 1$. By equation (\ref{eq1}), we have $$cR\cap \mm^{tn}\subset c\mm^{tn-k}\subset c\mm^{sn}\subset cI_n$$ for all $n\geq 1$. Thus $cR\cap \mm^{tn}\subset cI_n\cap \mm^{tn}$ for  all $n\geq 1$. 
		
		Therefore by \cite[Theorem 6.1]{C2014}, we have the limit $\lim\limits_{n\to\infty}\ell_R(cR/cI_n)/n^d=\lim\limits_{n\to\infty}\ell_R(R/I_n)/n^d$ exists. 
		
		$(ii)$
		Using the same arguments as in part $(i)$, we can replace $R$ by $S=\hat{R}/N(\hat{R})$. Note that $e(I_n)=e(I_nS)$ for all $n\geq 1$. Therefore by Lemma \ref{reduction}, It is enough to prove the result when $R$ is a complete local domain.
		
		Consider the graded families of ideals $\mathcal J(m)=\{J(m)_n\}$ and $\mathcal L(m)=\{L(m)_n\}$ for all $m\geq 1$ in the following way.
		\begin{enumerate}
			\item[$(a)$]  $\mathcal J=\{J_0=R \mbox{ and }J_n=cR {\mbox{ for all }n\geq 1}\}$ and $\mathcal L=\{L_0=R\mbox{ and }L_n=cI_n{\mbox{ for all }n\geq 1}\}$.
			\item[$(b)$] Let $m=1$. Consider $\mathcal J(m)=\mathcal J$ and $\mathcal L(m)=\mathcal L$.
			\item [$(c)$] 	Let $m\geq 2$. Consider $\mathcal J(m)=\{J(m)_n=c^nR\}$ and $\mathcal L(m)=\{L(m)_n=c^nI_m^n\}$.
		\end{enumerate}	
		Then $J(m)_1=J_m$, $L(m)_1=L_m$, $J(m)_n\subset J_{mn}$ and $L(m)_n\subset L_{mn}$ for all $m,n\geq 1$. Since $\dim R\geq 1$, $c$ is a nonzero element and $I_m$ is an $\mm$-primary ideal, we have  $cI_m$ is a nonzero ideal for all $m\geq 1$.
		
		We show that for $t=s+k+1$ and for all $n,m \geq 1$, $$J(m)_n\cap \mm^{tnm}=L(m)_n\cap \mm^{tnm}.$$
		Note that for $m=1$ and for all $n\geq 1$, by equation (\ref{1eq}), we have 		
		$$J(1)_n\cap \mm^{tn}=cR\cap \mm^{tn}=cI_n\cap \mm^{tn}=L(1)_n\cap \mm^{tn}.$$
		Suppose $m\geq 2$. We already have $c^nI_m^n\cap \mm^{tnm}\subset c^nR\cap \mm^{tnm}$ for all $n\geq 1$.
		Note that for all $n\geq 1$, $tnm-\alpha k\geq k$ for all $0\leq\alpha\leq n$.
		Let $c^nx\in c^nR\cap \mm^{tnm}$ for any $n\geq 1$ and for some $x\in R$. Then by  equation (\ref{eq1}), we get $$(c)(c^{n-1}x)\subset cR\cap\mm^{tnm}\subset c\mm^{tnm-k}.$$ As $c$ is a nonzerodivisor, $c^{n-1}x\in \mm^{tnm-k}$. Thus $(c)(c^{n-2}x)\in cR\cap \mm^{tnm-k}\subset c{\mm^{tnm-2k}}$. As $c$ is a nonzerodivisor, $c^{n-2}x \subset {\mm^{tnm-2k}}$. Continuing this process, we get $$x\in \mm^{tnm-nk}\subset\mm^{smn}\subset I_m^n.$$ Hence $c^nx\in c^nI_m^n=L(m)_n$ for all $n\geq 1$. Therefore for all $n\geq 1$ and $m\geq 2$, we have 
		$$J(m)_n\cap \mm^{tnm}=c^nR\cap \mm^{tnm}=c^nI_m^n\cap\mm^{tnm}=L(m)_n\cap \mm^{tnm}.$$ Thus by \cite[Theorem 4.1]{CL2024}, we have 
		\begin{eqnarray*}\lim\limits_{n\to\infty}d!\ell_R(R/I_n)/n^d&=&\lim\limits_{n\to\infty}d!\ell_R(J_n/L_n)/n^d=\lim\limits_{n\to\infty}d!\big(\frac{\lim\limits_{m\to\infty}{\frac{\ell_R(J(n)_m/L(n)_m)}{m^d}}}{n^d}\big)\\&=& \lim\limits_{n\to\infty}d!\big(\frac{\lim\limits_{m\to\infty}\ell_R(R/I_n^m)/m^d}{n^d}\big).
		\end{eqnarray*}
		Therefore by Lemma \ref{reduction}, we get the required result.
		
		$(iii)$ Let $s,u\in \mathbb Z_{>0}$ be
		such that $\mm^{sn}\subset I_n$, $\mm^{un}\subset J_n$ for all $n\geq 1$ and $c,c'\in R^o$ be such that $cI_mI_n\subset I_{m+n}$, $c'J_mJ_n\subset J_{m+n}$ for all $m,n\geq 1$. Then for all $m,n\geq 1$, we have  $cc'I_mJ_mI_nJ_n\subset I_{m+n}J_{m+n}$ and $\mm^{(s+u)n}\subset I_nJ_n$. Hence $\{I_nJ_n\}$ is a bounded below linearly weakly graded family of ideals in $R$.
		
		Let $x_n=e(I_nJ_n)/n^d$, $a_n=e(I_n)/n^d$ and $b_n=e(J_n)/n^d$. Using part $(ii)$, it is enough to show that 
		$$(\lim\limits_{n\to\infty}x_n)^{1/d}\leq (\lim\limits_{n\to\infty}a_n)^{1/d}+(\lim\limits_{n\to\infty}b_n)^{1/d}.$$
		Since $x_n, a_n,b_n$ are non-negative real numbers and $\lim\limits_{n\to\infty}x_n$, $\lim\limits_{n\to\infty}a_n$, $\lim\limits_{n\to\infty}b_n$ exist, we have that 
		$(\lim\limits_{n\to\infty}x_n)^{1/d}=\lim\limits_{n\to\infty}(x_n^{1/d})$, $(\lim\limits_{n\to\infty}a_n)^{1/d}=\lim\limits_{n\to\infty}(a_n^{1/d})$ and $(\lim\limits_{n\to\infty}b_n)^{1/d}=\lim\limits_{n\to\infty}(b_n^{1/d})$.  By \cite[Corollary 17.7.3]{SH}, we have
		
		\begin{eqnarray*}&&{e(I_nJ_n)}^{1/d}\leq {e(I_n)}^{1/d}+{e(J_n)}^{1/d}
			\\&\Rightarrow& {\Big(\frac{e(I_nJ_n)}{n^d}\Big)}^{1/d}\leq {\Big(\frac{e(I_n)}{n^d}\Big)}^{1/d}+{\Big(\frac{e(J_n)}{n^d}\Big)}^{1/d} \\&\Rightarrow& x_n^{1/d}\leq a_n^{1/d}+b_n^{1/d}
			\\&\Rightarrow& \lim\limits_{n\to\infty}(x_n^{1/d})\leq \lim\limits_{n\to\infty}(a_n^{1/d}+b_n^{1/d})= \lim\limits_{n\to\infty}(a_n^{1/d})+\lim\limits_{n\to\infty}(b_n^{1/d})
			\\&\Rightarrow& (\lim\limits_{n\to\infty}x_n)^{1/d}\leq (\lim\limits_{n\to\infty}a_n)^{1/d}+(\lim\limits_{n\to\infty}b_n)^{1/d}.
		\end{eqnarray*} 
		\\$(iv)$		Since $I_n\subset J_n$ and  $\overline{I_n}=\overline{J_n}$ for all $n\gg 0$, we have $e(I_n)=e(J_n)$ for all $n\gg 0$ by \cite[Proposition 11.2.1]{SH}. Thus we get the required result by part $(ii)$.
	\end{proof}	
	\begin{Remark}{\rm
			\begin{enumerate}
				\item Part $(i)$,  part $(ii)$ and  part $(iii)$ of Theorem \ref{existence} give  alternative proofs for Theorems $10.11$,  $10.14$ and $10.16$ in \cite{DM}.
				\item Let $(R,\mm)$ be a Noetherian local ring of prime characteristic $p>0$. A $p$-family of ideals is a sequence of ideals $\{I_q=I_{p^e}\}_{e\in\NN}$ such that $I_q^{[p]}\subset I_{pq}$ (here $I_q^{[p]}$ denotes the Frobenius power of $I_q$) for all $q$ a power of $p$. A family of ideals $\{I_{p^e}\}$ in $R$, is called a bounded below linearly weak $p$-family of ideals if there exist $c\in R^o$ and a positive integer $s$ such that  $cI_q^{[p]}\subset I_{pq}$  and $\mm^{sq}\subset I_q$ for all $q$ a power of $p$. Note that if $\{I_q\}$ is a  bounded below linearly weak $p$-family and $cI_q^{[p]}\subset I_{pq}$ for all $q$ a power of $p$ and for some $c\in R^o$ then $cI_q$ is a $p$-family. Thus using the same lines of proof of Theorem \ref{existence} $(i)$ and \cite[Theorem 1.2]{JH}, we have $\lim\limits_{n\to\infty}d!\ell_R(R/I_q)/q^d$ exists. This gives an alternative proof of \cite[Theorem 10.12]{DM} for $p$-family of ideals.
			\end{enumerate}
		}
	\end{Remark}	
	\begin{Example}\label{countmin}{\rm{\cite[Example 3.2]{CSV}}
		The inequality in part $(iii)$ of Theorem \ref{existence} can be strict.
		
		Let $R=\CC[x,y,z]_{(x,y,z)}/(y^2-x^2(x+1))$ and $\mm$ denote the maximal ideal of $R$. Then $R$ is a two-dimensional excellent local domain. We have the expansion $$x\sqrt{x+1}=a_1x+a_2x^2+a_3x^3+\cdots \mbox{ where } a_{n+1}=\frac{(-1)^{n-	1}(2n-3)!}{2^{2n-2}n!(n-2)!}\mbox{ for }n\geq 2.$$ 
		Consider the ideals  $$G_n=(y-a_1x-a_2x^2-\cdots-a_{n-1}x^{n-1})+\mm^n$$ and $$H_n=(y+a_1x+a_2x^2+\cdots+a_{n-1}x^{n-1})+\mm^n$$ in $R$. Let $\{I_n=G_{\lfloor\frac{n}{2}\rfloor}\}$ and $\{J_n=H_{\lfloor\frac{n}{2}\rfloor}\}$. Then $\{I_n\}$ and $\{J_n\}$ are bounded below linearly weakly graded families of ideals and 
		\begin{eqnarray*}
		\Big(\lim\limits_{n\to\infty}2!\ell_R(R/I_nJ_n)/n^2\Big)^{1/2}&=&1/\sqrt{2}	\\&<&1=1/2+1/2\\&=&\Big(\lim\limits_{n\to\infty}2!\ell_R(R/I_n)/n^2\Big)^{1/2}+\Big(\lim\limits_{n\to\infty}2!\ell_R(R/J_n)/n^2\Big)^{1/2}.
			\end{eqnarray*}	
	}
	\end{Example}	
	\begin{Example}\label{e2}{\rm
			The converse of part $(iv)$ of Theorem \ref{existence} is not true in general. \\Consider the filtrations $\{I_0=R \mbox{ and } I_n=(x^{2n},xy^2,y^{2n})\mbox{ for all }n\geq 1\}$ and $\{J_0=R\mbox{ and }J_n=(x^n,xy,y^n)\mbox{ for all }n\geq 1\}$ in $R=k[x,y]_{(x,y)}$ where $k[x,y]$ is polynomial ring over a field $k$. Then $\overline{I_n:\mm}\neq\overline{J_n:
				\mm}$ for all $n\geq 1$ but $\lim\limits_{n\to\infty}2!\ell_R(R/(I_n:\mm))/n^2=0=\lim\limits_{n\to\infty}2!\ell_R(R/(J_n:\mm))/n^2.$}
		
	\end{Example}	
	\section{the weakly graded family $\{(I_n:K)\}$}
	In this section, we mainly focus on the bounded below linearly weakly graded family of ideals of the form $\{(I_n:K)\}$ where $\{I_n\}$ is a $\mm$-primary graded family of ideals. We show that the limit $\lim\limits_{n\to\infty}d!\ell_R(R/(I_n: K))/n^d$ is bounded above by $e(\mathcal I)$. We discuss some cases where the upper bound is achieved. We also explore a necessary and sufficient condition for the equality of the limits  $\lim\limits_{n\to\infty}d!\ell_R(R/(I_n: K))/n^d$ and $\lim\limits_{n\to\infty}d!\ell_R(R/(J_n: K))/n^d$ for the families $\{(I_n: K)\}$ and $\{(J_n: K)\}$ with $I_n\subset J_n$ for all $n\geq 1$. We provide a necessary and sufficient condition for the equality in Minkowski inequality.
	 
For a weakly graded family of ideals (not necessarily bounded below linearly) of the  form $\{(I_n:K)\}$ where $\mathcal I=\{I_n\}$ is a filtration and satisfies $A(r)$ condition for some $r\in\ZZ_{>0}$, we show that the limit $\lim\limits_{n\to\infty}\ell_R(H_\mm^0(R/(I_{n}:K)))/{n}^{d}$ exists and it is bounded above by $\epsilon(\mathcal I)$ if $K$ is $\mm$-primary. 
\begin{Remark}\label{B2}{\em
		Suppose $\mathcal I=\{I_n\}$ is a Noetherian filtration of ideals in a Noetherian local ring $(R,\mm)$. Then by \cite[Proposition 3, page 159]{Bo}, there exists an integer $m\geq 1$ such that $I_{mn}=I_m^n$ for all $n\geq 1$. Let $K$ be an $\mm$-primary ideal in $R$ and $I_1\neq R$. Then there exists an integer $r\geq 1$ such that $I_{mr}=I_m^r\subset\mm^r\subset K$. Let $e=mr$. Then for all $n\geq 1$, we have $I_{en}=I_{mrn}=I_m^{rn}=I_{mr}^n=I_e^n$.
		
	}	
\end{Remark}	
	\begin{Proposition}\label{tech}
		Let $\mathcal I=\{I_n\}$ be a bounded below linearly weakly graded family of  ideals in a Noetherian local ring $(R,\mm)$ of dimension $d\geq 1$ with $\dim(N(\hat{R}))<d$. Let $K$ be any ideal in $R$. Then the following hold.
		\begin{enumerate} 
			\item[$(i)$]  
			$\lim\limits_{n\to\infty}d!\ell_R(R/(I_n: K))/n^d\leq e_W(\mathcal I)$. 
			\item[$(ii)$] Let $\mathcal I=\{I_n\}$ be a Noetherian filtration of $\mm$-primary ideals in $R$ and $K$ be an $\mm$-primary ideal in $R$. Then $$\lim\limits_{n\to\infty}d!\ell_R(R/(I_n: K))/n^d=e(\mathcal I).$$
			Suppose $R$ is analytically unramified and $I, K$ are $\mm$-primary ideals in $R$. Then $$\lim\limits_{n\to\infty}d!\ell_R(R/(I^n: K))/n^d=\lim\limits_{n\to\infty}d!\ell_R(R/(\overline{I^n}: K))/n^d.$$
			\item[$(iii)$] Suppose $R$ is a local domain. Let $\mathcal I=\{I_n=I(\nu_1)_{\lceil {na_1 }\rceil}\cap\cdots\cap I(\nu_r)_{\lceil {na_r}\rceil}\}$ be a discrete valued filtration with $\mm_{\nu_i}\cap R=\mm$ and $a_i\in\mathbb R_{>0}$ for all $1\leq i\leq r$. Then there exists a positive integer $w$ such that $(I_{wn}: K)\subset I_{w(n-1)}$ for all $n\geq 1$ and $$\lim\limits_{n\to\infty}d!\ell_R(R/(I_n: K))/n^d= e(\mathcal I).$$ 
		\end{enumerate}
	\end{Proposition}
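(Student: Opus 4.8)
The plan is to treat the three parts in increasing order of difficulty, using throughout that $\{(I_n:K)\}$ is again a bounded below linearly weakly graded family: one has $\mm^{sn}\subseteq I_n\subseteq (I_n:K)$, while the weak multiplicativity is exactly the content of the Remark following Definition \ref{def} (with $A=\{K\}$, so we may assume $K\cap R^o\neq\emptyset$). Consequently, whenever $\dim(N(\hat R))<d$, Theorem \ref{existence}$(i)$ already guarantees that $e_W(\{(I_n:K)\})=\lim_{n\to\infty}d!\,\ell_R(R/(I_n:K))/n^d$ exists, so the entire task is to \emph{identify} this limit. Part $(i)$ is then immediate: the inclusion $I_n\subseteq(I_n:K)$ gives a surjection $R/I_n\twoheadrightarrow R/(I_n:K)$, hence $\ell_R(R/(I_n:K))\le\ell_R(R/I_n)$ for every $n$; dividing by $n^d$ and letting $n\to\infty$ yields $e_W(\{(I_n:K)\})\le e_W(\mathcal I)$.

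For $(ii)$ I would first invoke Remark \ref{B2} to pass to powers of a single ideal: there is an integer $e$ with $I_{en}=I_e^{\,n}$ for all $n$, and since $K$ is $\mm$-primary there is an integer $N$ with $I_e^{\,N}\subseteq\mm^N\subseteq K$; moreover $e(\mathcal I)=e(I_e)/e^d$. Because the full limit exists and part $(i)$ gives the upper bound, it suffices to prove the reverse inequality along the subsequence $n=em$, i.e.\ $\liminf_{m}d!\,\ell_R(R/(I_e^{\,m}:K))/m^d\ge e(I_e)$. The engine is the chain
$$I_e^{\,m}\subseteq (I_e^{\,m}:K)\subseteq (I_e^{\,m}:I_e^{\,N})\subseteq \overline{I_e^{\,m-N}},$$
whose middle inclusion uses $I_e^{\,N}\subseteq K$ and whose last inclusion I would prove by the Rees valuations of $I_e$: writing $\nu(I_e)=\min\{\nu(a):a\in I_e\}$, if $xI_e^{\,N}\subseteq I_e^{\,m}$ then evaluating on an element of $I_e^{\,N}$ of minimal value gives $\nu(x)\ge m\,\nu(I_e)-N\,\nu(I_e)=(m-N)\nu(I_e)$ for every Rees valuation $\nu$, so $x\in\overline{I_e^{\,m-N}}$ (here $I_e$ being $\mm$-primary guarantees $I_e\not\subseteq P$ for every minimal prime $P$, so each $\nu$ sees $I_e^{\,N}$). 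Taking colengths in the chain gives $\ell_R(R/\overline{I_e^{\,m-N}})\le\ell_R(R/(I_e^{\,m}:K))\le\ell_R(R/I_e^{\,m})$; both outer functions have leading term $e(I_e)m^d/d!$, so a squeeze yields $\lim_m d!\,\ell_R(R/(I_e^{\,m}:K))/m^d=e(I_e)$, and since the full limit exists it must equal this subsequential value, i.e.\ $e_W(\{(I_n:K)\})=e(\mathcal I)$.

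The second assertion of $(ii)$ then follows by applying the first to the two Noetherian filtrations $\{I^n\}$ and $\{\overline{I^n}\}$ (the latter is a Noetherian filtration precisely because $R$ is analytically unramified), both of which have multiplicity $e(I)$; hence the two displayed limits coincide. For $(iii)$ I would construct $w$ explicitly from the centres of the valuations. Set $b_i=\min\{\nu_i(g):g\in K\}$ (finite since $R$ is a domain and $K\ne 0$) and choose $w\ge\max_i\lceil (b_i+1)/a_i\rceil$. If $x\in(I_m:K)$ then, picking $g\in K$ with $\nu_i(g)=b_i$, one gets $\nu_i(x)\ge\lceil m a_i\rceil-b_i$; the elementary estimate $\lceil m a_i\rceil-\lceil (m-w)a_i\rceil\ge w a_i-1\ge b_i$ then forces $\nu_i(x)\ge\lceil (m-w)a_i\rceil$, so $(I_m:K)\subseteq I_{m-w}$ for all $m\ge w$, the case $m=wn$ being the asserted $(I_{wn}:K)\subseteq I_{w(n-1)}$. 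The sandwich $I_m\subseteq(I_m:K)\subseteq I_{m-w}$ gives $\ell_R(R/I_{m-w})\le\ell_R(R/(I_m:K))\le\ell_R(R/I_m)$, and since $\lim_m d!\,\ell_R(R/I_m)/m^d=e(\mathcal I)$ exists for the discrete valued filtration and $(m-w)^d/m^d\to1$, the squeeze gives $\lim_m d!\,\ell_R(R/(I_m:K))/m^d=e(\mathcal I)$.

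I expect the only real obstacle to be the lower bound in $(ii)$: parts $(i)$ and $(iii)$ amount to a colength inequality and a direct valuation estimate, but in $(ii)$ one must bound $(I_e^{\,m}:K)$ from \emph{above}, which is exactly the inclusion into $\overline{I_e^{\,m-N}}$. Both the Rees valuation computation and the identity $\lim_m d!\,\ell_R(R/\overline{I_e^{\,m}})/m^d=e(I_e)$ are cleanest over a reduced ring, so I would first reduce $(ii)$ to the case where $R$ is complete and reduced (hence analytically unramified, making $\{\overline{I_e^{\,m}}\}$ a Noetherian filtration of multiplicity $e(I_e)$), exactly as in the proof of Theorem \ref{existence}: colon ideals commute with the faithfully flat map $R\to\hat R$ and preserve colength, while passing from $\hat R$ to $S=\hat R/N(\hat R)$ alters $\ell_R(R/(I_e^{\,m}:K))$ only by $O(m^{d-1})$ thanks to $\dim(N(\hat R))<d$. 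After this reduction the chain of inclusions lives in a setting where every step is available, and the two squeezes close.
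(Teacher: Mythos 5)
Your proposal is correct, and parts $(i)$ and $(iii)$ run essentially parallel to the paper's argument; in $(iii)$ your choice $w\geq\max_i\lceil (b_i+1)/a_i\rceil$ with the one-line ceiling estimate is in fact cleaner than the paper's construction (which builds $w=b(t+1)$ from $b=b_1\cdots b_r$ and auxiliary integers $t_i$, and tests $z\in(I_{wn}:K)$ against $y_i^{c_i\lceil a_i\rceil}$), and it yields the stronger containment $(I_m:K)\subseteq I_{m-w}$ for all $m\geq w$, which moreover makes the limit in $(iii)$ exist by a two-sided sandwich without invoking Theorem \ref{existence} for the colon family. The genuine divergence is in $(ii)$: where you bound $(I_e^m:K)\subseteq(I_e^m:I_e^N)\subseteq\overline{I_e^{m-N}}$ via Rees valuations and then control $\ell_R(R/\overline{I_e^{m-N}})$ using Rees' theorem on analytically unramified rings, the paper instead arranges the exponent $e$ in Remark \ref{B2} so that $I_e\subseteq K$ itself, and applies Ratliff's theorem \cite[Theorem 4.1]{Rat} to get $(I_e^n:K)\subseteq(I_e^n:I_e)=I_e^{n-1}$ for $n\gg 0$. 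The paper's route avoids integral closures entirely, needing only $\grade(I_e)\geq 1$ (automatic in the reduced complete ring), while yours trades Ratliff for the more elementary valuative inclusion at the cost of needing Noetherianity of $\{\overline{I_e^m}\}$; both close after the same reduction to $S=\hat{R}/N(\hat{R})$. Two small points deserve a sentence in your write-up. First, colon ideals do not commute with the surjection $\hat{R}\to S$, only $(I_e^m:K)S\subseteq(I_e^mS:KS)$ holds; the paper records this inequality explicitly, and in your argument the containment happens to point the favorable way for the lower bound, so nothing breaks, but it is glossed over in your reduction step. Second, in $(i)$ your appeal to the Remark for the existence of $e_W(\{(I_n:K)\})$ genuinely requires $K\cap R^o\neq\emptyset$, which holds automatically in $(ii)$ and $(iii)$ but not for a completely arbitrary ideal $K$; writing ``so we may assume'' does not dispose of this case, though the paper is equally silent on it, so you share, rather than introduce, that gap.
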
	
\begin{proof}
$(i)$ Since $I_n\subset (I_n:K)$ for all $n\geq 1$, we have 
$$ \lim\limits_{n\to\infty}d!\ell_R(R/(I_n:K))/n^d\leq \lim\limits_{n\to\infty}d!\ell_R(R/I_n)/n^d=e_W(\mathcal I).$$
$(ii)$ Let $S=\hat{R}/N(\hat{R})$. Then by the first paragraph of the proof of part $(i)$ of Theorem \ref{existence}, we have $e(\mathcal I)=e(\mathcal IS)$ where $\mathcal IS=\{I_nS\}$ and $\lim\limits_{n\to\infty}d!\ell_R(R/(I_n: K))/n^d=\lim\limits_{n\to\infty}d!\ell_S(S/(I_n: K)S)/n^d$. Hence
\begin{eqnarray*}\lim\limits_{n\to\infty}d!\ell_S(S/(I_nS: KS))/n^d&\leq&\lim\limits_{n\to\infty}d!\ell_S(S/(I_n: K)S)/n^d\\&=&\lim\limits_{n\to\infty}d!\ell_R(R/(I_n: K))/n^d\leq e(\mathcal I).\end{eqnarray*}
Therefore it is enough to show that $e(\mathcal I)=e(\mathcal IS)\leq \lim\limits_{n\to\infty}d!\ell_S(S/(I_nS: KS))/n^d$. Thus we assume $R$ is a reduced complete local ring. Note that $I_n$ are $\mm$-primary ideals and hence $\grade(I_n)\geq 1$ for all $n\geq 1$. Since $\mathcal I$ is a Noetherian filtration, by Remark \ref{B2}, there exists an integer $e\geq 1$ such that $I_e\subset K$ and $I_{en}=I_e^n$ for all $n\geq 1$.
By \cite[Theorem 4.1]{Rat}, for all $n\gg 0$, we have $(I_e^{n}:K)\subset (I_e^{n}:I_e)= I_e^{n-1}.$
 Hence using $(i)$, we get
\begin{eqnarray*}
	 \lim\limits_{n\to\infty}d!\ell_R(R/(I_n: K))/n^d&=&\lim\limits_{n\to\infty}d!\ell_R(R/(I_{en}: K))/{(en)}^d\\&=&\lim\limits_{n\to\infty}d!\ell_R(R/(I_e^n: K))/{(en)}^d\\&\geq & \lim\limits_{n\to\infty}\big[\Big(d!\ell_R(R/I_e^{n-1})/{(e(n-1))}^d\Big)\Big({(e(n-1))}^d/{(en)}^d\Big)\big]\\&=&\lim\limits_{n\to\infty}d!\ell_R(R/I_{e(n-1)})/{(e(n-1))}^d=e(\mathcal I).
\end{eqnarray*}	
Since $R$ is an analytically unramified local ring, by \cite{Rees61}, $\mathcal I=\{\overline{I^n}\}$ is a Noetherian filtration. It is well known that   $e(I)=e(\mathcal I)$. Thus we get the required result.

$(iii)$ If $K=R$ then we get the result. Suppose $K\subseteq \mm$. Let $b_i=\nu_i(K)$ where $\nu_i(K)=\min\{\nu_i(r):r\in K\}$,  $b=b_1\cdots b_r$ and $c_i=b/b_i$ for all $1\leq i\leq r$. Let $y_i\in K$ such that $\nu_i(y_i)=b_i$ for all $1\leq i\leq r$.  

We  have $a_i>0$ for all $1\leq i\leq r$. Now $\lceil{a_i-1}\rceil=a_i-1+g_i$ for some $0\leq g_i<1$. 
Then $a_i-\lceil{a_i-1}\rceil=1-g_i$.
Let $t_i\in\ZZ_{>0}$ be such that $\frac{1}{t_i+1}\leq 1-g_i\leq \frac{1}{t_i}$,  $t=\max\{t_1+1,\cdots,t_r+1\}$ and $w=b(t+1)$.  
	
Suppose $0<a_i<1$.  Then $\lceil{a_i}\rceil=1$, $\lceil{a_i-1}\rceil=0$, $a_i(t+1)-1=(1-g_i)(t+1)-1>0$ and hence  $$\lceil{b(a_i(t+1)-\lceil{a_i }\rceil)}\rceil =\lceil b(a_i(t+1)-1)\rceil\geq 1.$$
	Suppose $a_i\geq 1$. Then $a_i+a_i-\lceil{a_i }\rceil>0$ and hence 
	$$\lceil{b(a_i(t+1)-\lceil{a_i }\rceil)}\rceil \geq \lceil b(2a_i-\lceil{a_i }\rceil)\rceil=\lceil b(a_i+a_i-\lceil{a_i }\rceil)\rceil\geq 1.$$
	Therefore for all $a_i>0$, we have
	$$
	\lceil{a_iw}\rceil-1- \lceil{a_i}\rceil b =\lceil{a_iw-\lceil{a_i}\rceil b}\rceil-1=\lceil{b(a_i(t+1)-\lceil{a_i }\rceil)}\rceil-1\geq 0.$$
	
	Let $z\in (I_{wn}:K)$. Then $zK \subset I_{wn}=\bigcap\limits_{i=1}^rI(\nu_i)_{\lceil{wna_i}\rceil}$. In particular, $zy_i^{c_i\lceil{a_i}\rceil}\in I(\nu_i)_{\lceil{wna_i}\rceil}$ for all $1\leq i\leq r$. Thus for all $1\leq i\leq r$, we have  \begin{eqnarray*}\lceil{a_iw(n-1)}\rceil+\lceil{a_iw}\rceil-1\leq \lceil{a_iwn}\rceil&\leq& \nu_i(y_i^{{c_i}\lceil{a_i}\rceil}z)=\lceil{a_i}\rceil c_i\nu_i(y_i)+\nu_i(z)\\&=&\lceil{a_i}\rceil b+\nu_i(z). \end{eqnarray*} 
	Hence $\nu_i(z)\geq \lceil{a_iw(n-1)}\rceil$ and $z\in I(\nu_i)_{\lceil{a_iw(n-1)}\rceil}$. Therefore $z\in I_{w(n-1)}$. Thus using part $(i)$, we get,
	\begin{eqnarray*}e(\mathcal I)&\geq &\lim\limits_{n\to\infty}d!\ell_R(R/(I_{n}:K))/{n}^d\\&=&\lim\limits_{n\to\infty}d!\ell_R(R/(I_{wn}:K))/{(wn)}^d\\&\geq& \lim\limits_{n\to\infty}\big[\Big(d!\ell_R(R/I_{w(n-1)})/{(w(n-1))}^d \Big)\Big({(w(n-1))}^d/{(wn)}^d\Big)\big]\\&=&
 \lim\limits_{n\to\infty}d!\ell_R(R/I_{w(n-1)})/{(w(n-1))}^d	=e(\mathcal I).\end{eqnarray*}
\end{proof}	
In the next result, we provide  necessary and sufficient conditions for the equality of $\lim\limits_{n\to\infty}d!\ell_R(R/(I_n: K))/n^d$ and $\lim\limits_{n\to\infty}d!\ell_R(R/(J_n: K))/n^d$ and for the equality in Minkowski inequality. 
\begin{Theorem}
	Let $(R,\mm)$ be an analytically irreducible local domain and $K$ be an ideal in $R$.
	\begin{enumerate}
		\item[$(i)$] Let $\mathcal I=\{I_n\}$ be a real bounded filtration of $\mm$-primary ideals and $\mathcal J=\{J_n\}$ be a filtration of $\mm$-primary ideals in $R$ such that $I_n\subset J_n$ for all $n\geq 1$. Then  $\overline{R[\mathcal I]}=\overline{R[\mathcal J]}$ if and only if $$\lim\limits_{n\to\infty}\ell_R(R/(I_n:K))/n^d=\lim\limits_{n\to\infty}\ell_R(R/(J_n:K))/n^d.$$
		\item[$(ii)$] (Minkowski equality) Let $\mathcal I=\{I_n\}$ and $\mathcal J=\{J_n\}$  be two integral bounded filtrations of $\mm$-primary ideals in $R$. Then equality holds in $(iii)$ of Theorem \ref{existence} for the bounded below linearly  weakly graded families of ideals $\{(I_n:K)\}$ and $\{(J_n:K)\}$ if and only if there exist positive integers $a, b$ such that $\overline{\sum\limits_{n\geq 0} I_{an}t^n}=\overline{\sum\limits_{n\geq 0} J_{bn}t^n}$ where the integral closures are in $R[t]$.
	\end{enumerate}
\end{Theorem}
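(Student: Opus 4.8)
The plan is to reduce both parts to already-established theorems about ordinary filtrations by first isolating the following \emph{key lemma}: for any bounded filtration $\mathcal A=\{A_n\}$ of $\mm$-primary ideals and any ideal $K$,
$$\lim_{n\to\infty}d!\,\ell_R(R/(A_n:K))/n^d=e(\mathcal A).$$
To prove this I would let $\mathcal C=\{C_n\}$ be the divisorial filtration with $\overline{R[\mathcal A]}=R[\mathcal C]$; since $\mathcal A$ consists of $\mm$-primary ideals, the divisorial valuations defining $\mathcal C$ are centered at $\mm$, so Proposition \ref{tech}$(iii)$ applies to $\mathcal C$ and gives $\lim d!\,\ell_R(R/(C_n:K))/n^d=e(\mathcal C)$. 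The containment $A_n\subset C_n$ yields $(A_n:K)\subset(C_n:K)$ and hence $\lim d!\,\ell_R(R/(A_n:K))/n^d\ge e(\mathcal C)$, while Proposition \ref{tech}$(i)$ gives the reverse bound $\le e(\mathcal A)$. Because $\overline{R[\mathcal A]}=R[\mathcal C]=\overline{R[\mathcal C]}$ by Lemma \ref{integrallyclosed}, the multiplicity of an $\mm$-primary filtration being an invariant of the integral closure of its Rees algebra \cite{CSS} forces $e(\mathcal A)=e(\mathcal C)$, and the two bounds pinch the limit to $e(\mathcal A)$.

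For part $(i)$, the lemma does most of the work. For the forward implication, if $\overline{R[\mathcal I]}=\overline{R[\mathcal J]}$ then this common algebra is $R[\mathcal C]$ for a divisorial $\mathcal C$, so both $\mathcal I$ and $\mathcal J$ are bounded with the same associated $\mathcal C$; applying the key lemma to each shows both limits equal $e(\mathcal C)$. For the converse, writing $\mathcal I'=\{(I_n:K)\}$ and $\mathcal J'=\{(J_n:K)\}$, the key lemma gives $e_W(\mathcal I')=e(\mathcal I)$, while Proposition \ref{tech}$(i)$ gives $e_W(\mathcal J')\le e(\mathcal J)$; combined with $e(\mathcal I)\ge e(\mathcal J)$ (from $I_n\subset J_n$) and the hypothesised equality $e_W(\mathcal I')=e_W(\mathcal J')$, I obtain $e(\mathcal I)=e(\mathcal J)$. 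Since $\mathcal I$ is bounded and $I_n\subset J_n$, the generalization of Rees' theorem to filtrations from \cite{CSS} then yields $\overline{R[\mathcal I]}=\overline{R[\mathcal J]}$.

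For part $(ii)$, with $\mathcal I',\mathcal J'$ as above and $\mathcal I'\mathcal J'=\{(I_n:K)(J_n:K)\}$, the key lemma gives $e_W(\mathcal I')=e(\mathcal I)$ and $e_W(\mathcal J')=e(\mathcal J)$. The crucial step is the identification $e_W(\mathcal I'\mathcal J')=e(\mathcal{IJ})$ for the product filtration $\mathcal{IJ}=\{I_nJ_n\}$: from $I_nJ_n\subset(I_n:K)(J_n:K)$ I get $e_W(\mathcal I'\mathcal J')\le e(\mathcal{IJ})$, while from the double-colon inclusion $(I_n:K)(J_n:K)\subset(I_nJ_n:K^2)$ together with the key lemma applied to the bounded filtration $\mathcal{IJ}$ with the ideal $K^2$ I get $e_W(\mathcal I'\mathcal J')\ge e_W(\{(I_nJ_n:K^2)\})=e(\mathcal{IJ})$. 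With these three identifications, Minkowski equality for $\mathcal I',\mathcal J'$ becomes exactly $e(\mathcal{IJ})^{1/d}=e(\mathcal I)^{1/d}+e(\mathcal J)^{1/d}$, that is, Minkowski equality for the integral bounded filtrations $\mathcal I,\mathcal J$, whose characterization in terms of the existence of $a,b\in\ZZ_{>0}$ with $\overline{\sum_{n\ge0}I_{an}t^n}=\overline{\sum_{n\ge0}J_{bn}t^n}$ is the content of the Minkowski-equality theorem for filtrations in \cite{CS}.

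I expect the main obstacles to be twofold. First, in the key lemma, the equality $e(\mathcal A)=e(\mathcal C)$ — that passing to the integral closure of the Rees algebra does not change the filtration multiplicity — must be invoked carefully, since $C_n$ can be strictly larger than $\overline{A_n}$ for non-Noetherian filtrations; I would lean on the invariance established in \cite{CSS}. Second, in part $(ii)$, I must verify that $\mathcal{IJ}=\{I_nJ_n\}$ is again an integral bounded filtration so that the key lemma applies to it with $K^2$; this reduces to checking that the product of two integral divisorial filtrations has integrally closed Rees algebra equal to that of an integral divisorial filtration built from the two families of valuations with summed values, which I would extract from the analysis of products of divisorial filtrations in \cite{CS,C2021}. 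The remaining steps are the routine colength comparisons recorded above.
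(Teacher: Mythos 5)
Your part $(i)$ and your ``key lemma'' are essentially the paper's own argument: the paper too writes $\overline{R[\mathcal I]}=R[\mathcal C]$ with $\mathcal C$ divisorial, uses Lemma \ref{integrallyclosed} and \cite[Theorem 5.1]{CS} to get $e(\mathcal I)=e(\mathcal C)$, and pinches $\lim d!\,\ell_R(R/(I_n:K))/n^d$ between $e(\mathcal C)$ (via Proposition \ref{tech}$(iii)$ applied to $\mathcal C$ and the containment $I_n\subset C_n$) and $e(\mathcal I)$ (via Proposition \ref{tech}$(i)$); for the converse it argues by contradiction through \cite[Theorem 14.4]{C2021}, while you argue directly through a Rees-type theorem for filtrations --- this is fine since $\mathcal I$ is bounded, though the reference you want is \cite[Theorem 14.4]{C2021} rather than \cite{CSS}, and likewise the Minkowski-equality characterization you invoke at the end of $(ii)$ is \cite[Theorem 14.5]{C2021}, not \cite{CS}. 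The converse direction of your part $(ii)$ also goes through: it only uses $Y=e(\mathcal I)$, $Z=e(\mathcal J)$ from the key lemma, the easy upper bound $X\leq e(\mathcal{IJ})$ coming from $I_nJ_n\subset (I_n:K)(J_n:K)$, and Theorem \ref{existence}$(iii)$, exactly as in the paper.

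The genuine gap is in the forward direction of $(ii)$, at your lower bound $e_W(\mathcal I'\mathcal J')\geq e(\mathcal{IJ})$. You pass through $(I_n:K)(J_n:K)\subset (I_nJ_n:K^2)$ and then apply the key lemma to $\mathcal{IJ}$ with the ideal $K^2$, which requires $\mathcal{IJ}$ (equivalently $\mathcal H=\{C_nT_n\}$, since $\overline{R[\mathcal{IJ}]}=\overline{R[\mathcal H]}$) to be a bounded filtration. This is nowhere established, and your proposed reduction --- that $\overline{R[\mathcal H]}$ equals the Rees algebra of the divisorial filtration built from ``summed values'' --- is precisely the doubtful assertion: if $\mathcal D$ is that divisorial filtration one only has $C_nT_n\subset D_n$, hence $\overline{R[\mathcal H]}\subseteq R[\mathcal D]$, and equality would force $e(\mathcal H)=e(\mathcal D)$, a nontrivial positivity statement (sections of the summed divisor would have to factor asymptotically into products of sections) which one has no right to expect in general; neither \cite{CS} nor \cite{C2021} contains such a product-boundedness result. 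The paper avoids the issue entirely: it never takes a colon of the product filtration. Instead it applies Proposition \ref{tech}$(iii)$ to $\mathcal C$ and $\mathcal T$ \emph{separately}, producing $w,w'$ with $(C_{wn}:K)\subset C_{w(n-1)}$ and $(T_{w'n}:K)\subset T_{w'(n-1)}$, so that with $u=ww'$ one gets $(C_{un}:K)(T_{un}:K)\subset C_{u(n-1)}T_{u(n-1)}$, whence $X\geq \lim d!\,\ell_R(R/(C_n:K)(T_n:K))/n^d\geq e(\mathcal H)$ by the usual shift-of-index limit comparison; then \cite[Theorem 14.5]{C2021} applied to $\mathcal C,\mathcal T$ (noting that $\overline{\sum I_{an}t^n}=\overline{\sum J_{bn}t^n}$ gives $\sum C_{an}t^n=\sum T_{bn}t^n$) yields $e(\mathcal H)^{1/d}=e(\mathcal C)^{1/d}+e(\mathcal T)^{1/d}=Y^{1/d}+Z^{1/d}$, closing the pinch. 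Replacing your $(I_nJ_n:K^2)$ step by this two-filtration shifting argument repairs the proof without the unproven boundedness claim.
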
	
\begin{proof}
	Let $X=\lim\limits_{n\to\infty}d!\ell_R(R/(I_n:K)(J_n:K))/n^d$, $Y=\lim\limits_{n\to\infty}d!\ell_R(R/(I_n:K)/n^d$ and $Z=\lim\limits_{n\to\infty}d!\ell_R(R/(J_n:K))/n^d$.
	\\$(i)$ Let $\overline{R[\mathcal I]}=R[\mathcal C]$ where $\mathcal C=\{C_n\}$ is a divisorial filtration. By Lemma \ref{integrallyclosed}, we have $R[\mathcal C]=\overline{R[\mathcal C]}$ and by \cite[Theorem 5.1]{CS}, we have $e(\mathcal I)=e(\mathcal C)$.  
	\\Suppose $\overline{R[\mathcal I]}=\overline{R[\mathcal J]}$. Then $I_n\subset J_n\subset C_n$ for all $n\geq 1$ and using  $(i)$ and $(iii)$ of Proposition \ref{tech}, 
	we get, $$e(\mathcal C)=\lim\limits_{n\to\infty}d!\ell_R(R/(C_n:K))/n^d\leq Z\leq Y \leq e(\mathcal I)= e(\mathcal C)$$ which implies $Y=Z$.
	\\Now we prove the converse. Let $Y=Z$. Suppose $\overline{R[\mathcal I]}\neq \overline{R[\mathcal J]}$. 
	By \cite[Theorem 14.4]{C2021}, we have $e(\mathcal I)\neq e(\mathcal J)$. Now $I_n\subset J_n$ for all $n\geq 1$, imply $e(\mathcal J)<e(\mathcal I)=e(\mathcal C)$. Since $I_n\subset C_n$ for all $n\geq 1$, using $(i)$ and $(iii)$ of Proposition \ref{tech}, we get
$$Z\leq e(\mathcal J)< e(\mathcal C)=\lim\limits_{n\to\infty}d!\ell_R(R/(C_n:K))/n^d\leq Y$$ which contradicts the assumption $Y=Z$.
	\\$(ii)$ Due to part $(iii)$  of Theorem \ref{existence}, it is enough to prove that $X^{1/d}\geq Y^{1/d}+Z^{1/d}$. Let $\overline{R[\mathcal I]}=R[\mathcal C]$ and $\overline{R[\mathcal J]}=R[\mathcal T]$ where $\mathcal C=\{C_n\}$ and $\mathcal T=\{T_n\}$ are integral divisorial filtrations. Note that $I_n\subset C_n$ and $J_n\subset T_n$ for all $n\geq 1$. By Lemma \ref{integrallyclosed}, we have $\overline{R[\mathcal I]}=R[\mathcal C]=\overline{R[\mathcal C]}$ and $\overline{R[\mathcal J]}=R[\mathcal T]=\overline{R[\mathcal T]}$. Therefore using \cite[Theorem 5.1]{CS}, part $(i)$ and part $(iii)$ of Proposition \ref{tech}, we have 
	$$e(\mathcal I)=e(\mathcal C)=\lim\limits_{n\to\infty}d!\ell_R(R/(C_n:K))/n^d\leq Y\leq e(\mathcal I)\mbox{ and}$$
	$$e(\mathcal J)=e(\mathcal T)=\lim\limits_{n\to\infty}d!\ell_R(R/(T_n:K))/n^d\leq Z\leq e(\mathcal J).$$ Hence $Y=e(\mathcal C)$ and $Z=e(\mathcal T).$ Therefore by \cite[Theorem 14.5]{C2021}, it is enough to show that $X\geq e(\mathcal H)$ where $\mathcal H=\{C_nT_n\}$.  Note that $$X=\lim\limits_{n\to\infty}d!\ell_R(R/(I_n:K)(J_n:K))/n^d\geq \lim\limits_{n\to\infty}d!\ell_R(R/(C_n:K)(T_n:K))/n^d.$$ Thus it is enough to show that $\lim\limits_{n\to\infty}d!\ell_R(R/(C_n:K)(T_n:K))/n^d\geq e(\mathcal H)$. By part $(iii)$ of  Proposition \ref{tech}, there exist positive integers $w$ and $w'$ such that $(C_{wn}: K)\subset C_{w(n-1)}$ and $(T_{w'n}: K)\subset T_{w'(n-1)}$ for all $n\geq 1$. Let $u=ww'$.  Then for all $n\geq 1$, $$C_{un}\subset(C_{un}:K)=(C_{ww'n}:K)\subset C_{w(w'n-1)}\subset C_{ww'(n-1)}=C_{u(n-1)}\mbox{ and}$$  $$T_{un}\subset(T_{un}:K)=(T_{ww'n}:K)\subset T_{w'(wn-1)}\subset T_{ww'(n-1)}=T_{u(n-1)}.$$ Therefore
	\begin{eqnarray*}e(\mathcal H)&=&\lim\limits_{n\to\infty}d!\ell_R(R/C_{u(n-1)}T_{u(n-1)})/{(u(n-1))}^d\\&\leq& \lim\limits_{n\to\infty}\big[\Big(d!\ell_R(R/(C_{un}:K)(T_{un}:K))/{(un)}^d\Big)\Big({(un)}^d/{(u(n-1))}^d\Big)\big]\\&=& \lim\limits_{n\to\infty}d!\ell_R(R/(C_n:K)(T_n:K))/n^d.\end{eqnarray*} 
	Now $\overline{\sum\limits_{n\geq 0} I_{an}t^n}=\overline{\sum\limits_{n\geq 0} J_{bn}t^n}$ implies ${\sum\limits_{n\geq 0} C_{an}t^n}={\sum\limits_{n\geq 0} T_{bn}t^n}$ and hence by \cite[Theorem 14.5]{C2021} and Minkowski inequality, we have $Y^{1/d}+Z^{1/d}=e(\mathcal C)^{1/d}+e(\mathcal T)^{1/d}=e(\mathcal H)^{1/d}\leq X^{1/d}\leq Y^{1/d}+Z^{1/d}$. For the converse, note that $e(\mathcal I)^{1/d}+e(\mathcal J)^{1/d}=e(\mathcal C)^{1/d}+e(\mathcal T)^{1/d}=Y^{1/d}+Z^{1/d}=X^{1/d}\leq e(\mathcal {IJ})^{1/d}\leq e(\mathcal I)^{1/d}+e(\mathcal J)^{1/d}$ where $\mathcal {IJ}=\{I_nJ_n\}$. Hence the result follows from \cite[Theorem 14.5]{C2021}.
\end{proof}	
Next we explore the asymptotic behaviour of the length function $\ell_R(H_\mm^0(R/(I_{n}:K)))/{n}^{d}$ where $\mathcal I=\{I_n\}$ is a filtration which satisfies $A(r)$ condition for some $r\in\ZZ_{>0}$. 
\begin{Theorem}\label{weakep}
	Let $(R,\mm)$ be an analytically unramified local ring of dimension $d\geq 1$ and $K$ be an ideal in $R$. 
	\begin{enumerate} 
		\item[$(i)$] Let 
		$\mathcal I=\{I_n\}$ be a filtration of ideals in $R$ which satisfies $A(r)$ condition for some $r\in\ZZ_{>0}$. Then the limit $$\lim\limits_{n\to\infty}\ell_R(H_\mm^0(R/(I_{n}:K)))/{n}^{d}$$ exists. 
		\\Suppose $K$ is an $\mm$-primary ideal in $R$. Then $(I_{n}:K)^{\sat}=I_n^{\sat}$ for all $n\geq 1$ and $$\lim\limits_{n\to\infty}d!\ell_R(H_\mm^0(R/(I_{n}:K)))/{n}^{d}\leq \epsilon(\mathcal I).$$ 
		\item[$(ii)$] Let $\mathcal I=\{I_n\}$ be a Noetherian filtration of ideals in $R$ with  $\grade(I_1)\geq 1$ and $K$ be an $\mm$-primary ideal in $R$. Then $$\lim\limits_{n\to\infty}d!\ell_R(H_\mm^0(R/({I_{n}}:K)))/{n}^{d}= \epsilon(\mathcal I).$$ 
		In particular, if $I$ is an ideal in $R$ with $\grade(I)\geq 1$ and $K$ is an $\mm$-primary ideal in $R$ then $$\lim\limits_{n\to\infty}d!\ell_R(H_\mm^0(R/(\overline{I^{n}}:K)))/{n}^{d}=\lim\limits_{n\to\infty}d!\ell_R(H_\mm^0(R/({I^{n}}:K)))/{n}^{d}.$$
	\end{enumerate}
\end{Theorem}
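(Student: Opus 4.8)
My plan rests on two elementary observations about the colon family. First, since $K$ is finitely generated, $(I_n:K)^{\sat}=(I_n:K):\mm^\infty=\bigcup_{j}(I_n:K\mm^j)=I_n^{\sat}:K$, so that $H^0_\mm(R/(I_n:K))=(I_n^{\sat}:K)/(I_n:K)$. Second, the family $\{(I_n:K)\}$ inherits the $A(r)$ condition of $\mathcal I$: if $x\in(I_n^{\sat}:K)\cap\mm^{rn}$, then $xK\subseteq I_n^{\sat}$ and $xK\subseteq\mm^{rn}$, whence $xK\subseteq I_n^{\sat}\cap\mm^{rn}=I_n\cap\mm^{rn}\subseteq I_n$ and so $x\in(I_n:K)$; together with the trivial reverse inclusion this gives $(I_n:K)^{\sat}\cap\mm^{rn}=(I_n:K)\cap\mm^{rn}$. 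Thus $\{(I_n:K)\}$ is a weakly graded family (with multiplier any $c\in K\cap R^o$) satisfying $A(r)$ with the same $r$.

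For the existence of $\lim_n\ell_R(H^0_\mm(R/(I_n:K)))/n^d$ I would establish, for this weakly graded family satisfying $A(r)$, the analogue of the epsilon-limit theorem for $A(r)$ filtrations, by the nonzerodivisor reduction of Theorem \ref{existence}. Fix a nonzerodivisor $c\in K\cap R^o$ (possible since $R$ is reduced) and pass to $\{c(I_n:K)\}$: multiplication by $c$ is injective, so $R/(I_n:K)\cong cR/c(I_n:K)$, and the exact sequence $0\to cR/c(I_n:K)\to R/c(I_n:K)\to R/cR\to0$ shows that $|\ell_R(H^0_\mm(R/c(I_n:K)))-\ell_R(H^0_\mm(R/(I_n:K)))|\le\ell_R(H^0_\mm(R/cR))$, a bound independent of $n$; hence the two limits agree. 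The Artin--Rees estimate (\ref{eq1}) together with the $A(r)$ condition then lets me compare $\{c(I_n:K)\}$ with a graded family exactly as in Theorem \ref{existence}, so the limit exists. When $K$ is $\mm$-primary, choosing $a$ with $\mm^a\subseteq K$ gives $(I_n:K)^{\sat}=\bigcup_j(I_n:K\mm^j)\subseteq\bigcup_j(I_n:\mm^{a+j})=I_n^{\sat}$, and the reverse inclusion is clear, so $(I_n:K)^{\sat}=I_n^{\sat}$; then $\ell_R(H^0_\mm(R/(I_n:K)))=\ell_R(I_n^{\sat}/(I_n:K))\le\ell_R(I_n^{\sat}/I_n)$ because $I_n\subseteq(I_n:K)$, which yields the bound $\le\epsilon(\mathcal I)$ after dividing by $n^d$.

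For (ii) I would reduce to a single ideal. A Noetherian filtration satisfies $I_{en}=I_e^n$ for some $e$, and after enlarging $e$ we may assume $I_e\subseteq K$ (Remark \ref{B2}); moreover $\grade(I_e)\ge1$, since a nonzerodivisor $x\in I_1$ produces the nonzerodivisor $x^e\in I_1^e\subseteq I_e$. As the limit exists by (i) and $\epsilon(\mathcal I)=\epsilon(I_e)/e^d$ (evaluating along $n=em$), it suffices to prove, for $J:=I_e$, that $\lim_m d!\ell_R(H^0_\mm(R/(J^m:K)))/m^d=\epsilon(J)$. Using $(J^m:K)^{\sat}=(J^m)^{\sat}$, this is equivalent to $\ell_R((J^m:K)/J^m)=o(m^d)$. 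Choosing $\mm^a\subseteq K\subseteq\mm^b$ and sandwiching, it is enough to bound $\ell_R((J^m:\mm^j)/J^m)$ for fixed $j$. By a theorem of Ratliff \cite{Rat}, $(J^m:J)=J^{m-1}$, hence $(J^m:J^j)=J^{m-j}$, for $m\gg0$; since $J^j\subseteq\mm^j$ we get $(J^m:\mm^j)\subseteq J^{m-j}$, so $(J^m:\mm^j)/J^m$ is an $\mm^j$-torsion submodule of $J^{m-j}/J^m$, giving $\ell_R((J^m:\mm^j)/J^m)\le\sum_{i=m-j}^{m-1}\ell_R((0:_{G_i}\mm^j))$, where $G_i$ is the $i$th graded component of $G=\operatorname{gr}_J(R)$.

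The step I expect to be the main obstacle is the estimate $\ell_R((0:_{G_i}\mm^j))=O(i^{d-1})$. I would obtain it by noting that $(0:_G\mm^j)=\bigoplus_i(0:_{G_i}\mm^j)$ is a finitely generated graded module over $G/\mm^jG$, whose dimension equals that of the special fibre ring $G/\mm G$ and is therefore at most $\dim R=d$; its Hilbert function then grows at most like $i^{d-1}$. Summing the fixed number $j$ of terms yields $\ell_R((J^m:\mm^j)/J^m)=O(m^{d-1})=o(m^d)$, which proves the single-ideal statement and hence (ii). Finally the ``in particular'' assertion follows by applying (ii) to the two Noetherian filtrations $\{I^n\}$ and $\{\overline{I^n}\}$ (the latter Noetherian since $R$ is analytically unramified, and of grade $\ge1$); both have epsilon multiplicity $\epsilon(I)$, the epsilon multiplicity being unchanged on passing to the integral closure filtration.
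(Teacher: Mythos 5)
Your proof is correct in substance and its architecture coincides with the paper's: part $(i)$ is the nonzerodivisor reduction of Theorem \ref{existence} combined with Cutkosky's limit theorem, and part $(ii)$ reduces via Remark \ref{B2} to the single ideal $J=I_e\subset K$ and kills the error term $\ell_R((J^m:K)/J^m)$ by Ratliff's theorem together with an analytic-spread dimension count. The local differences are worth recording. In $(i)$, your identity $(I_n:K)^{\sat}=I_n^{\sat}:K$ and the consequent inheritance of $A(r)$ by $\{(I_n:K)\}$, with the same $r$, is tidier than the paper, which instead proves directly that $c(I_n:K)^{\sat}\cap \mm^{ln}=c(I_n:K)\cap\mm^{ln}$ for $l=r+2k$ and applies \cite[Theorem 6.1]{C2014} to the pair $c(I_n:K)\subset c(I_n:K)^{\sat}$, whose quotient is exactly $H^0_\mm(R/(I_n:K))$ since $c$ is a nonzerodivisor. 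Note, though, that your route still owes a verification you only gesture at: $(c(I_n:K))^{\sat}$ is in general strictly larger than $c\,(I_n:K)^{\sat}$ (the discrepancy involves torsion of the kind $(cR)^{\sat}/cR$), so the $A(r)$ property of $\{(I_n:K)\}$ does not formally transfer to the filtration $\{c(I_n:K)\}$; one must argue separately, e.g.\ via Artin--Rees and $(cR)^{\sat}\cap\mm^m\subset cR$ for $m\gg 0$, that $\{c(I_n:K)\}$ satisfies $A(r')$ for some $r'$ before invoking the epsilon-limit theorem of \cite{CS2024}. This check does succeed, and your uniform bound $\ell_R(H^0_\mm(R/cR))$ on the difference of the two $H^0$-lengths is a clean way to pass back, but it is an extra computation of exactly the same nature as the paper's $l=r+2k$ step, not a consequence of your inheritance lemma alone. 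In $(ii)$, your replacement of the paper's Rees-algebra module $(L:_{R[I_e]}T)/L$ over $R[I_e]/\mm^tR[I_e]$ by the $\mm^j$-torsion $(0:_G\mm^j)$ in $G=\operatorname{gr}_{I_e}(R)$, after sandwiching $\mm^a\subset K$, is a faithful variant resting on the same two inputs (Ratliff, and analytic spread $\leq d$); just say ``at most'' rather than ``equals'' when comparing $\dim(0:_G\mm^j)$ with $\dim G/\mm G$ --- only the inequality is true, and it is all you use.

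Two smaller points need repair. Your parenthetical that $c\in K\cap R^o$ exists ``since $R$ is reduced'' is not a justification: $K$ may be contained in a minimal prime, in which case $K\cap R^o=\emptyset$ even for reduced $R$. The paper's proof likewise posits $c\in R^o$ with $c(I_m:K)(I_n:K)\subset(I_{m+n}:K)$ without comment, so the theorem is implicitly read with the hypothesis (as in the remark following Definition \ref{def}) that $K\cap R^o\neq\emptyset$, making $\{(I_n:K)\}$ weakly graded; state that hypothesis rather than derive it from reducedness. In $(ii)$, ``the limit exists by $(i)$'' presupposes that a Noetherian filtration with $\grade(I_1)\geq 1$ satisfies $A(r)$ (and you should dispose of the trivial case $I_1=R$ first, since Remark \ref{B2} needs $I_1\neq R$): this is true but requires the citations the paper supplies, namely \cite[Proposition 2.4]{CM} and \cite[Theorem 3.4]{Sw}; likewise your closing assertion that epsilon multiplicity is unchanged on passing to $\{\overline{I^n}\}$ is \cite[Proposition 2.1]{JMV}. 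With these references and the $A(r')$ verification made explicit, your argument is complete.
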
	
\begin{proof}
	$(i)$ Let $c\in R^o$ be such that $c(I_m:K)(I_n:K)\subset (I_{m+n}:K)$ for all $m,n\geq 1$. Then $c(I_m:K)^{\sat}(I_n:K)^{\sat}\subset (I_{m+n}:K)^{\sat}$ for all $m,n\geq 1$. Hence $\{(I_n:K)^{\sat}\}$ is a weakly graded family of ideals in $R$. Thus $\{c(I_n:K)\}$ and $\{c(I_n:K)^{\sat}\}$ are filtrations of ideals in $R$. 
	
	Since $\hat{R}$ is a faithfully flat extension of $R$, we can replace $R$, $I_n$ and $K$ by $\hat{R}$, $I_n\hat{R}$ and $K\hat{R}$ respectively. Note that $\hat{R}$ is reduced. Hence by Remark \ref{nzd}, $c$ is a nonzerodivisor. Now $\{I_n\}$  satisfies $A(r)$ condition for some $r\in\ZZ_{>0}$. Hence for all $n\geq 1$, we have \begin{equation}I\label{ep1}_n^{\sat}\cap \mm^{rn}=I_n\cap \mm^{rn}.\end{equation} By  Artin-Rees Lemma, there exists a positive integer $k$ such that for all $n\geq k$,
	$cR\cap \mm^n=\mm^{n-k}(cR\cap \mm^k)\subset c\mm^{n-k}$. Let $l=r+2k$. We show that for all $n\geq 1$,
	$$c(I_n:K)^{\sat}\cap\mm^{ln}=c(I_n:K)\cap\mm^{ln}.$$
We already have $c(I_n:K)\cap\mm^{ln}\subset c(I_n:K)^{\sat}\cap\mm^{ln}$. Let $a\in c(I_n:K)^{\sat}\cap\mm^{ln}$. Then $a=cx$ for some $x\in (I_n:K)^{\sat}$. Therefore $xK\mm^q\subset I_n$ for some $q\in\ZZ_{>0}$ and $xK\subset I_n^{\sat}$. Note that $aK=cxK\subset \mm^{ln}\cap cR\subset c\mm^{ln-k}\subset c\mm^{rn}$. Since $c$ is a nonzerodivisor, we have $xK\subset \mm^{rn}$. Thus by equation (\ref{ep1}), we have $xK\subset I_n^{\sat}\cap\mm^{rn}=I_n\cap\mm^{rn}$. Hence $x\in  (I_n:K)$ and  $a=cx\in c(I_n:K)\cap \mm^{ln}$.
	
	Therefore by \cite[Theorem 6.1]{C2014}, we have the existence of the limit
	\begin{eqnarray*}\lim\limits_{n\to\infty}\ell_R(c(I_n:K)^{\sat}/c(I_n:K))/n^d&=&\lim\limits_{n\to\infty}\ell_R((I_n:K)^{\sat}/(I_n:K))/n^d\\&=&\lim\limits_{n\to\infty}\ell_R(H_\mm^0(R/(I_{n}:K)))/{n}^{d}.\end{eqnarray*}
	Suppose $K$ is an $\mm$-primary ideal. Then there exists $t\in \ZZ_{>0}$ such that $\mm^t\subset K\subset \mm$. Hence $I_n^{\sat}=(I_n:\mm)^{\sat}\subset (I_n:K)^{\sat}\subset (I_n:\mm^t)^{\sat}=I_n^{\sat}$ for all $n\geq 1$. Thus by \cite[Theorem 1.2]{CS2024}, we have
$$\displaystyle\lim\limits_{n\to\infty}d!\ell_R(H_\mm^0(R/(I_{n}:K)))/{n}^{d}\leq \lim\limits_{n\to\infty}d!\ell_R(H_\mm^0(R/I_{n}))/{n}^{d}=\epsilon(\mathcal I).$$
	$(ii)$ Let $\mathcal I=\{I_n\}$ be a Noetherian filtration of ideals in $R$ with $\grade(I_1)\geq 1$. If $I_n=R$ for all $n\geq 1$, then we get the required result. Suppose $I_1 \neq R$. By \cite[Proposition 2.4]{CM} and \cite[Theorem 3.4]{Sw}, there exists $r\in\ZZ_{>0}$ such that $\mathcal I$ satisfies $A(r)$ condition. Hence using  \cite[Theorem 6.1]{C2014}, we have $\epsilon(\mathcal I)$ exists and by $(i)$, $\displaystyle\lim\limits_{n\to\infty}d!\ell_R(H_\mm^0(R/(I_{n}:K)))/{n}^{d}$ exists. By Remark \ref{B2}, there exists an integer $e\geq 1$ such that $I_e\subset K$ and $I_{en}=I_e^n$ for all $n\geq 1$. Therefore using $(i)$, we have 
	\begin{eqnarray}\label{noetherian}\lim\limits_{n\to\infty}d!\ell_R(H_\mm^0(R/({I_{n}}:K)))/{n}^{d}\nonumber&=&\lim\limits_{n\to\infty}d!\ell_R(H_\mm^0(R/({I_{en}}:K)))/{(en)}^{d}\nonumber\\&=&\lim\limits_{n\to\infty}d!\ell_R(H_\mm^0(R/({I_e^{n}}:K)))/{(en)}^{d}.\end{eqnarray}
	Since $\grade(I_1)\geq 1$, we have $\grade(I_e)\geq 1$.
	Consider the ideals $L=\bigoplus\limits_{n\in\mathbb N}I_e^{n+1}$ and $T=K R[I_e]$  in  the  Rees algebra $R[I_e]=\bigoplus\limits_{n\in\NN}I_e^n$. Then  $(J:_{R[I_e]} T)$ is a finitely generated $R[I_e]$-module and hence $M=(L:_{R[I_e]} T)/L$ is a finitely generated $R[I_e]$-module. Note that the $n$th-graded component of $M$ is
	$\displaystyle\frac{(I_e^{n+1}:K)\cap I_e^{n}}{I_e^{n+1}}.$  
	
	By \cite[Theorem 4.1]{Rat}, for all $n\gg 0$, we have $(I_e^{n+1}:K)\subset (I_e^{n+1}:I_e)= I_e^{n}.$ Therefore for all $n\gg 0$, the $n$th-graded component of $M$ is
	$(I_e^{n+1}:K)/I_e^{n+1}.$ Since $K$ is $\mm$-primary, there exists a positive integer $t$ such that $M$ is a finitely generated $R[I_e]/\mm^tR[I_e]$-module. Hence for all $n\gg 0$, $\displaystyle\ell_R((I_e^{n+1}:K)/I_e^{n+1})$ is a polynomial of degree less than or equal to $\ell(I_e)-1\leq d-1$ where $\ell(I_e)$ is the analytic spread of $I_e$. Thus $\displaystyle\lim\limits_{n\to\infty}d!\ell_R(({I_e^{n}}:K)/I_e^{n})/{n}^{d}=0$.
	Therefore by equation (\ref{noetherian}) and part $(i)$, we get
	 \begin{eqnarray}\label{epsilonone}
		\lim\limits_{n\to\infty}d!\ell_R(H_\mm^0(R/({I_{n}}:K)))/{n}^{d}\nonumber&=&\lim\limits_{n\to\infty}d!\ell_R(H_\mm^0(R/({I_e^{n}}:K))/{(en)}^{d}\nonumber\\&=&\lim\limits_{n\to\infty}d!\ell_R({(I_e^n)}^{\sat}/({I_e^{n}}:K))/{(en)}^{d}\nonumber\\&=&\lim\limits_{n\to\infty}d!\ell_R({(I_e^n)}^{\sat}/{I_e^{n}})/{(en)}^{d}-\lim\limits_{n\to\infty}d!\ell_R(({I_e^{n}}:K)/I_e^{n})/{(en)}^{d}\nonumber\\&=&\lim\limits_{n\to\infty}d!\ell_R({(I_e^n)}^{\sat}/{I_e^{n}})/{(en)}^{d}\nonumber\\&=&\lim\limits_{n\to\infty}d!\ell_R({(I_n)}^{\sat}/{I_{n}})/{n}^{d}\nonumber\\&=&\lim\limits_{n\to\infty}d!\ell_R(H_\mm^0(R/{I_{n}}))/{n}^{d}=\epsilon(\mathcal I)\nonumber.
	\end{eqnarray}	 
	
	Since $R$ is an analytically unramified local ring, by \cite{Rees61}, $\mathcal I=\{\overline{I^n}\}$ is a Noetherian filtration. By \cite[Corollary 6.3]{C2014}, $\epsilon(I)=\lim\limits_{n\to\infty}d!\ell_R({(I^n)}^{\sat}/{I^{n}})/{n}^{d}$ exists and by \cite[Proposition 2.1]{JMV},  $\epsilon(\mathcal I)= \lim\limits_{n\to\infty}d!\ell_R(\overline{I^n}^{\sat}/\overline{I^{n}})/{n}^{d}$ exists and $\epsilon(\mathcal I)=\epsilon(I)$. Thus we get the required result.
\end{proof}
\begin{Remark}{\rm
		By \cite[Lemma 4.2]{CQT}, we have 
		$\lim\limits_{n\to\infty}d!\ell_R(({I_e^{n}}:\mm)/I_e^{n})/{n}^{d}=0$. Hence by replacing $K$ by $\mm$ in part $(ii)$ of Theorem \ref{weakep}, for any Noetherian filtration $\mathcal I=\{I_n\}$ (without the assumption that $\grade(I_1)\geq 1$) in an analytically unramified local ring $(R,\mm)$, we get		
		$\lim\limits_{n\to\infty}d!\ell_R(H_\mm^0(R/({I_{n}}:\mm)))/{n}^{d}= \epsilon(\mathcal I)$.
}\end{Remark}
\subsection*{Acknowledgements.}
The author would like to thank Steven Dale Cutkosky for his valuable comments.

\end{document}